\theoremstyle{plain}
\newtheorem{defi}{Definition}[section]
\newtheorem{theo}{Theorem}[section]
\newtheorem{lem}{Lemma}[section]
\newtheorem{prop}{Proposition}[section]
\newcommand{\R}{\mathbb{R}}
\journal{}
\begin{document}
	
	\begin{frontmatter}
		
		%% Title, authors and addresses
		
		%% use the tnoteref command within \title for footnotes;
		%% use the tnotetext command for theassociated footnote;
		%% use the fnref command within \author or \address for footnotes;
		%% use the fntext command for theassociated footnote;
		%% use the corref command within \author for corresponding author footnotes;
		%% use the cortext command for theassociated footnote;
		%% use the ead command for the email address,
		%% and the form \ead[url] for the home page:
		%% \title{Title\tnoteref{label1}}
		% \tnotetext[label1]{}
		%% \author{Name\corref{cor1}\fnref{label2}}
		%% \ead{email address}
		%% \ead[url]{home page}
		%% \fntext[label2]{}
		%% \cortext[cor1]{}
		%% \address{Address\fnref{label3}}
		%% \fntext[label3]{}
		
		\title{Approximation of the geodesic curvature and applications for spherical  geometric subdivision schemes}
		
		%% use optional labels to link authors explicitly to addresses:
		\author[1]{Aziz Ikemakhen. }
		\ead{ikemakhen@fstg-marrakech.ac.ma}
		
		\author[1]{Mohamed Bellaihou \corref{cor1}}
		\ead{mohamed.bellaihou@ced.uca.ma}
		
		\address[1]{Cadi-Ayyad University, FSTG BP 549 Marrakesh, Morocco.}
		%\address[2]{Cadi-Ayyad University, FSTG BP 549 Marrakesh, Morocco}
		\cortext[cor1]{Corresponding author}
		
		%\author{M. Bellaihou}
		
		%\address{}
		
		\begin{abstract}
			Many applications of geometry modeling and computer graphics necessite accurate curvature estimations of curves on the plane or on manifolds. In this paper, we define the notion of the discrete geodesic curvature of a geodesic polygon on a smooth surface. We show that, when a geodesic polygon P is closely inscribed on a $C^2$-regular curve, the discrete geodesic curvature of P estimates the  geodesic curvature of C.
			This result  allows us to evaluate the geodesic curvature of discrete curves on surfaces. In particular, we apply such result to planar and spherical 4-point angle-based subdivision schemes. We show that such schemes cannot generate in general $G^2$-continuous curves.
			We also give a novel example of $G^2$-continuous subdivision scheme on the unit sphere using only points and discrete geodesic curvature called curvature-based 6-point spherical scheme.
			
		\end{abstract}
		
		\begin{keyword}
			Discrete geodesic curvature, Geometric subdivision curves, $G^2$-continuity.
			
		\end{keyword}
		
	\end{frontmatter}
	\section{Introduction}
	In discrete differential geometry, the curvature of a plane or space curve at a point $p$ can be defined as the inverse of the radius of the oscillating circle. This one is the limit of the circle interpolating $p$ and two points moving closer to $p$ along the curve. The radii of the interpolating circle is often used as a discrete curvature of polygonal lines so as to estimate the pointwise curvature of plane curves \cite{Mo}.  Another discrete curvature associated to a polygon $P=\{p_0,..,p_n\}$ at a vertex $p_i$, involving the length of two consecutive edges and the exterior angle at $p_i$ (called the angular defect), were introduced in \cite{Bo} and  \cite{Bo2}. This notion were applied to
	shape blending or morphing of plane curves (\cite{Ma}, \cite{Su}).\\
For subdivision curves in the plane, instead of speaking about regular parametrized curves, we speak about $G^1$-continuous ones. Independently of any parametrization, one can refine iteratively a given polygon in order to approximate curves without cusps. Such schemes are said to be geometric. In \cite{Dy}, we find the so-called angle-based 4-point scheme generating planar $G^1$-limit curves using angle rules.\\
An example of geometric subdivision schemes in three-dimensional space is given in \cite{Ca}. Using a sequence of space points and vectors, one can define the new point and vector on the sphere determined by two consecutive points and their tangents. The scheme can produce $G^1$-limit curves and when the data are sampled from a common sphere the scheme provides a spherical curve.\\
The first $G^2$-continuous planar geometric subdivision scheme was introduced in \cite{De}. For spatial $G^2$-curves we find in \cite{We} and \cite{Ca} examples of those schemes. Unfortunately, the evidence for such schemes to be $G^2$-continuous is only numerical. Theoretical proof remains very difficult. For the plane case, E. Volontè gave in her thesis \cite{Vo} sufficient condition for a planar geometric subdividion scheme to be $G^2$-continuous. But the proof is incomplete. \\
In order to design spherical curves refining only data points on a sphere, the authors in \cite{Be} gave a purely geometrical definition of a spherical interpolatory subdivision scheme. They propose a spherical generalization of the planar incenter subdivision scheme \cite{De} refining point-vector data by proving the convergence and $G^1$-continuity. \\
In this paper, we  define the notion of discrete geodesic curvature for a geodesic polygon on a smooth surface (see definition 2.2). We prove the main result that stipulates for a geodesic polygon
$P=\{p_{0}, \ldots, p_{n}\}$ closely inscribed on a
$C^2$-regular curve $C$, the discrete geodesic curvature  of $P$   estimates the  geodesic curvature of $C$ at the sample points $p_i$.
We give then the spherical generalization of the angle-based 4-point scheme generating spherical curves without involving Hermite data as opposite to \cite{De} and \cite{Be}. We prove that the subdivision scheme is convergent and $G^1$-continuous in the sense of \cite{Be}. We show also that the spherical angle-based 4-point scheme cannot be in general $G^2$-continuous using the discrete geodesic curvature estimation.\\
The rest of the paper is organized as follows: Section 2 is devoted to define the discrete geodesic curvature of an embedded polygon on a surface and to prove that it is an approximation of the geodesic curvature of a regular curve.
In section 3, we give a spherical generalization of the geometric 4-point scheme \cite{Dy} and we prove that it is convergent and $G^1$-continuous. We also investigate the discrete geodesic curvature to show that the proposed spherical scheme cannot be in general $G^2$-continuous. In the end, we give a novel example of $G^2$-continuous scheme using only points and discrete geodesic curvatures called the curvature-based 6-point spherical scheme.
	\section{Discrete geodesic curvature of embedded curves on surfaces}
	In this section, we will prove the result in Theorem \ref{a3}. For that, we need some definitions.\\
	Let  $ \sigma : I \rightarrow S $ be a regular $C^2$-curve parametrized by arc length in some oriented smooth surface $S$ with Gauss map $ \mathbf{n} : S \rightarrow \mathbb{S}^{2} $. The geodesic curvature at $p=\sigma(t)$ is defined  by:
	\begin{equation}
		\kappa_{g}(p):=<\ddot{\sigma}(t),\mathbf{n}(p)\wedge \dot{\sigma}(t)>=<\Big(\frac{D\dot{\sigma}}{dt}\Big)(t),\mathbf{n}(p)\wedge \dot{\sigma}(t)>,
		\label{GeodCurv}
	\end{equation}
	where $\displaystyle \frac{D}{dt}$ is the covariant derivative on $S$, and we have:
	$$
	\ddot{\sigma}(t)=\Big(\frac{D\dot{\sigma}}{dt}\Big)(t)+<\ddot{\sigma}(t)\wedge \mathbf{n}(p)>\mathbf{n}(p).
	$$
	
	\begin{defi}\label{d0}
		A geodesic V-line  $\mathbf{V}:=\{p_1,p,p_2\} $   on a surface $S$  is made of two geodesics $c_1$ and $c_2$  connecting  respectively $p_1$ to $p$ and $p$ to  $p_2$ (see Fig. \ref{fig:p1}).\\
		If the surface is plane, the geodesics are rectilinear segments and $\mathbf{V} $  is simply called a V-line.
	\end{defi}
	
	\begin{defi}\label{d1}
		\begin{enumerate}
			\item Let  $\mathbf{P}:= \{q_1,q,q_2\} \subset \R^2$ be a   V-line.  The  discrete curvature of $\mathbf{P}$ at $q$ is defined by
			$$
			\displaystyle \kappa_{d}(q_1,q,q_2):=\frac{2\, \tilde{\delta}}{qq_1+qq_2} ,$$
			where $\tilde{\delta}$ denotes the angle  $\sphericalangle(\overrightarrow{q_1q},\overrightarrow{qq_2})$, called the angular defect.
			\item Let  $\mathbf{L}= \{p_1,p,p_2\} \subset S$ be a  geodesic V-line. We define the discrete geodesic curvature of $\mathbf{L} $  at $p$  by:
			$$\kappa_g(p_1,p,p_2):=\frac{2\, \delta}{l_1+l_2},$$
			where $\delta:=\sphericalangle(u_1,u_2)$ is the geodesic angular defect, and $u_i$ the tangent vector at $p$ of the geodesic $c_i$. $ l_i$ is the  length of $c_i$.
		\end{enumerate}
		
	\end{defi}
	
	\begin{theo}\label{a3}
		Let $\sigma : I \rightarrow S$ be a  regular $C^2$-curve on a smooth surface $S$ and  $p$ be a point on $\sigma(I)$.  Then its geodesic  curvature $\kappa(p) $  at $p$ can be obtained as a limit of discrete geodesic curvature of geodesic V-line  $\mathbf{P}=\{p_1,p,p_2\} $ inscribed on it, i.e:
		$$ \displaystyle \lim_{\substack{ p_1,p_2 \to p \\ p_1, p_2 \in \sigma(I)} }\, \kappa_g(p_1,p,p_2)=\kappa(p).$$

	\end{theo}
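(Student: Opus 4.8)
The plan is to transport the entire configuration into geodesic normal coordinates centred at $p$, where the intrinsic geodesic data on $S$ becomes \emph{exact} Euclidean data, and then to reduce the claim to the corresponding planar limit. Take $p=\sigma(0)$, fix an orthonormal basis of $T_pS$ positively oriented with respect to $\mathbf{n}(p)$, and let $x:U\to\R^2$ be the associated normal chart with $x(p)=0$. I would first record the three standard properties I intend to use: (i) $dx_p:T_pS\to\R^2$ is a linear (orientation-preserving) isometry; (ii) the Christoffel symbols vanish at $p$, so for any $C^2$ curve the covariant acceleration at $p$ coincides with the ordinary second derivative of its coordinate representation; and (iii) (Gauss lemma) for $q\in U$ the geodesic distance is $d(p,q)=|x(q)|$ and the minimizing geodesic from $p$ to $q$ is the radial segment, whose unit tangent at $p$ is $dx_p^{-1}\!\big(x(q)/|x(q)|\big)$.

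Next I would identify the two curvature notions. Writing $\gamma:=x\circ\sigma$, a regular $C^2$ plane curve with $\gamma(0)=0$ and $|\dot\gamma(0)|=1$, properties (i)--(ii) show that the geodesic curvature \eqref{GeodCurv} at $p$ equals the signed Euclidean curvature of $\gamma$ at $0$: under $dx_p$ the vector $\mathbf{n}(p)\wedge\dot\sigma(0)$ becomes the $+\tfrac{\pi}{2}$ rotation $J\dot\gamma(0)$, while $\tfrac{D\dot\sigma}{dt}(0)$ becomes $\ddot\gamma(0)$, so $\kappa(p)=\langle\ddot\gamma(0),J\dot\gamma(0)\rangle$. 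Writing $p_1=\sigma(t_1)$, $p_2=\sigma(t_2)$ with $t_1<0<t_2$, property (iii) gives $l_i=|\gamma(t_i)|$, and since the geodesic tangents at $p$ are the radial directions and $dx_p$ preserves angles, $dx_p(u_1)=-\gamma(t_1)/|\gamma(t_1)|$ and $dx_p(u_2)=\gamma(t_2)/|\gamma(t_2)|$. Hence the geodesic angular defect is exactly $\delta=\angle\big(-\gamma(t_1),\gamma(t_2)\big)$.

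Comparing with Definition \ref{d1}(1) applied to the planar V-line $\{\gamma(t_1),0,\gamma(t_2)\}$, whose sides are $|\gamma(t_1)|$ and $|\gamma(t_2)|$ and whose angular defect is precisely $\angle(-\gamma(t_1),\gamma(t_2))$, I obtain the exact identity $\kappa_g(p_1,p,p_2)=\kappa_d\big(\gamma(t_1),0,\gamma(t_2)\big)$. This collapses the theorem to the purely planar statement that the discrete curvature of a V-line inscribed in a regular $C^2$ plane curve tends to the curvature at the vertex as the two outer points approach it. I would close this last step by a second-order Taylor expansion of $\gamma$ at $0$: the chord directions $-\gamma(t_1)$ and $\gamma(t_2)$ each deviate from $\dot\gamma(0)$ by an angle $\tfrac12|\kappa|\,l_i+o(l_i)$ with consistent sign, so $\delta=\tfrac12|\kappa|(l_1+l_2)+o(l_1+l_2)$, while $l_1+l_2=|t_1|+|t_2|+o(|t_1|+|t_2|)$; dividing, the factor $(l_1+l_2)$ cancels and the ratio tends to $|\kappa|$ for $t_1,t_2\to0$ independently, since $o(l_1)/(l_1+l_2)\to0$ and likewise for $l_2$.

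The only genuinely delicate point is the first paragraph: it is properties (i)--(iii) of normal coordinates that make the passage from intrinsic geodesic data (lengths, tangent directions, angular defect) to planar Euclidean data an \emph{equality} rather than merely an asymptotic approximation, thereby also converting \eqref{GeodCurv} into the signed planar curvature of $\gamma$. Once that reduction is secured, both the identification of the two curvatures and the concluding limit are short and routine. I would add a remark that, because the angular defect in Definition \ref{d1} is an unsigned angle, the limit is in fact $|\kappa(p)|$, i.e.\ the geodesic curvature up to sign, which is the quantity the discrete construction is designed to recover.
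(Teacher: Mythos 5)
Your proof is correct and its global strategy coincides with the paper's: both pass to normal coordinates at $p$, use the vanishing of the Christoffel symbols there to identify $\big(\tfrac{D\dot\sigma}{dt}\big)(0)$ with $\ddot\gamma(0)$ for $\gamma=\exp_p^{-1}\circ\,\sigma$ (hence the geodesic curvature of $\sigma$ with the planar curvature of $\gamma$; this is Lemma \ref{a1}), and use the fact that radial geodesics are carried isometrically, with their angles, onto radial segments of $T_pS$ to turn $\kappa_g(p_1,p,p_2)$ into the planar discrete curvature of the image V-line \emph{exactly}. The only genuine divergence is in the residual planar limit (Lemma \ref{a0}): the paper obtains it from the circumscribed circle through the identity $\rho_p=\kappa_d(p_1,p,p_2)\,\cos(\alpha_p-\beta_p)\,\frac{\sin(\delta_p/2)}{\delta_p/2}$, a purely geometric argument, whereas you use a second-order Taylor expansion with Peano remainder. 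Both work at $C^2$ regularity (your remainder needs only twice-differentiability at $0$, and you correctly keep the errors as $o(l_i)$ \emph{individually}, so the two points may approach $p$ at independent rates — the one place where the double limit could otherwise go wrong); the paper's route has the advantage of exhibiting $\kappa_d$ as a perturbation of the circumradius curvature, which it reuses later, while yours is self-contained and makes the first-order behaviour of the angular defect explicit. One small correction: with the convention used throughout the paper the angular defect is a \emph{signed} angle (cf. \eqref{14} and the appendix), so the limit is the signed curvature $\kappa(p)$ itself rather than $|\kappa(p)|$; your own Taylor computation in fact delivers the signed statement if you keep $\kappa=\langle\ddot\gamma(0),J\dot\gamma(0)\rangle$ in place of $|\kappa|$ in the angle estimates.
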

	In \cite{Bo}, the authors showed this theorem in the planar case for a  $C^3$- curve, using an analytic proof. In the following,  we begin to show  this case, assuming only that the curve is of class $C^2$, using a very simple geometric proof.
	\begin{lem}\label{a0}(\cite{Bo})\\
		Let $\gamma  : I \rightarrow \mathbb{R}^2$ be a planar regular $C^2$-curve and  $p$ be a point on it.
		Then its curvature $\kappa(p)$  at $p$ can be obtained as a limit of   discrete curvature  of V-line $\{p_1,p,p_2\} $ inscribed on it:
		$$\lim_{\substack{p_1,p_2 \to p, \\ p_1,p_2 \in \gamma(I)} }\, \kappa_{d}(p_1,p,p_2)=\kappa(p).$$
	\end{lem}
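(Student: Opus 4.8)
The plan is to reduce everything to the behaviour of the two \emph{chord--tangent angles} at $p$, exploiting the classical inscribed-angle picture (on a circle of radius $1/\kappa$ the angle between a chord and the tangent at one endpoint equals half the subtended arc). I would parametrize $\gamma$ by arc length, place $p=\sigma(0)$ and write $p_1=\sigma(-s_1)$, $p_2=\sigma(s_2)$ with $s_1,s_2>0$ small, so that $p$ lies between $p_1$ and $p_2$ along the curve. Let $T=\dot\sigma(0)$ and let $N$ be the unit normal, so that $\ddot\sigma(0)=\kappa(p)\,N$ in the plane. Denoting by $\beta_1$ the signed angle from $T$ to the chord direction $\overrightarrow{p_1p}$ and by $\beta_2$ the signed angle from $T$ to $\overrightarrow{pp_2}$, the angular defect is exactly $\tilde{\delta}=\beta_2-\beta_1$, so the whole statement reduces to showing $\beta_2-\beta_1=\tfrac{1}{2}\kappa(p)(s_1+s_2)+o(s_1+s_2)$ together with $pp_1+pp_2=s_1+s_2+o(s_1+s_2)$.

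For the key estimate I would use the second-order Taylor expansion with Peano remainder, which is available precisely because $\sigma$ is $C^2$:
\begin{equation*}
p_2-p=s_2\,T+\tfrac{s_2^2}{2}\kappa(p)\,N+o(s_2^2),\qquad p-p_1=s_1\,T-\tfrac{s_1^2}{2}\kappa(p)\,N+o(s_1^2).
\end{equation*}
Reading off the $T$- and $N$-components and using $\langle N,T\rangle=0$ gives
\begin{equation*}
\tan\beta_2=\frac{\tfrac{s_2^2}{2}\kappa(p)+o(s_2^2)}{s_2+o(s_2^2)}=\tfrac{1}{2}\kappa(p)\,s_2+o(s_2),\qquad \tan\beta_1=-\tfrac{1}{2}\kappa(p)\,s_1+o(s_1),
\end{equation*}
hence $\beta_2=\tfrac{1}{2}\kappa(p)\,s_2+o(s_2)$ and $\beta_1=-\tfrac{1}{2}\kappa(p)\,s_1+o(s_1)$. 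Summing yields $\tilde{\delta}=\beta_2-\beta_1=\tfrac{1}{2}\kappa(p)(s_1+s_2)+o(s_1+s_2)$, which is the geometric heart of the matter: each chord contributes half of $\kappa(p)$ times its arc length to the defect.

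Finally, the same expansions give $pp_i=\lvert p_i-p\rvert=s_i\sqrt{1+O(s_i^2)}=s_i+o(s_i)$, so dividing,
\begin{equation*}
\kappa_{d}(p_1,p,p_2)=\frac{2\,\tilde{\delta}}{pp_1+pp_2}=\frac{\kappa(p)(s_1+s_2)+o(s_1+s_2)}{(s_1+s_2)\bigl(1+o(1)\bigr)}\longrightarrow \kappa(p),
\end{equation*}
as $s_1,s_2\to0$, independently of the ratio $s_1/s_2$ (since $\tfrac{o(s_i)}{s_1+s_2}\le\tfrac{s_i}{s_1+s_2}\,\varepsilon_i\to0$). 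The step I expect to be most delicate, and where the hypothesis really matters, is controlling the remainders with only $C^2$ regularity: the earlier proof in \cite{Bo} relied on a $C^3$ expansion, whereas here one must make sure that every error is genuinely $o(s_i)$ (not merely $O(s_i)$) so that, after division by $s_1+s_2$, the limit survives for arbitrary and possibly very unbalanced choices of $s_1$ and $s_2$. Keeping the whole argument in terms of the two chord--tangent angles $\beta_1,\beta_2$ is what makes it geometric and dispenses with any appeal to a third derivative.
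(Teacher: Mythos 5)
Your proof is correct, but it takes a genuinely different route from the paper's. The paper works with the circumscribed circle of the V-line: by the law of sines its curvature is $\rho_p=\frac{2\sin\alpha_p}{pp_2}=\frac{2\sin\beta_p}{pp_1}$, where $\alpha_p,\beta_p$ are the triangle angles at $p_1$ and $p_2$; forming the mediant and applying a sum-to-product identity yields the \emph{exact} relation $\rho_p=\kappa_d(p_1,p,p_2)\,\cos\bigl(\tfrac{\alpha_p-\beta_p}{2}\bigr)\,\tfrac{\sin(\delta_p/2)}{\delta_p/2}$ of Eq.~\eqref{44}, and the limit follows because the two correction factors tend to $1$ while $\rho_p$ tends to $\kappa(p)$ (the classical convergence of the circumcircle of three nearby curve points to the osculating circle, which the paper invokes without proof). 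You instead bypass the circumcircle entirely: a second-order Taylor expansion with Peano remainder --- legitimate under the $C^2$ hypothesis --- gives each chord--tangent angle as $\tfrac12\kappa(p)s_i+o(s_i)$ and each chord length as $s_i+o(s_i)$, and you correctly control the division by $s_1+s_2$ for arbitrarily unbalanced $s_1,s_2$ via $\tfrac{s_i}{s_1+s_2}\le 1$. The paper's route buys an exact trigonometric identity relating the discrete curvature to the circumradius (useful for error terms), at the cost of resting on the osculating-circle limit as a known fact; your route buys a self-contained estimate that also keeps track of signs transparently through the signed angles $\beta_1,\beta_2$. Both arguments genuinely avoid any appeal to a third derivative, which is the point of the lemma as restated here.
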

	\begin{proof}
		We know that the curvature of the circumscribed circle defined by the V-line $ \{p_1,p,p_2\}$ is given by (see Fig. \ref{fig:p2}):
		\begin{equation*}
			\rho_p(p_1,p,p_2)=\displaystyle \frac{2\, sin\, \alpha_p}{pp_2}=\frac{2\, sin\, \beta_p}{pp_1}.
		\end{equation*}
		Then
		$$
		\rho_p(p_1,p,p_2) =\frac{2( sin \, \alpha_p +sin \, \beta_p)}{pp_2+pp_1}.
		$$
		Hence
		$$\rho_p(p_1,p,p_2)=\displaystyle \frac{4\, sin\bigg(\frac{\alpha_p+\beta_p}{2}\bigg)\,  cos\bigg(\frac{\alpha_p-\beta_p}{2}\bigg) }{pp_2+pp_1}=\frac{4\, sin\bigg(\frac{\delta_p}{2}\bigg)\,  cos\bigg(\frac{\alpha_p-\beta_p}{2}\bigg) }{pp_2+pp_1},$$
		where $\delta_p=\alpha_p+\beta_p$. So
		\begin{equation}
			\rho_p(p_1,p,p_2)=\displaystyle \kappa_d(p_1,p,p_2) \; cos(\alpha_p-\beta_p) \displaystyle \frac{sin(\delta_p/2)}{\delta_p/2}.
			\label{44}
		\end{equation}
		Since $\alpha_p, \, \beta_p \to 0$ as $p_1, \, p_2 \to p$, we conclude that
		$$ \displaystyle \lim_{p_1,p_2 \to p}\, \kappa_d (p_1,p,p_2)=\kappa(p).$$
	\end{proof}
	\begin{lem}\label{a1}
		Let $\sigma : I \rightarrow S$ be a $C^2$-regular curve, parametrized by its arc length,  lies on a smooth surface $S$ and $p=\sigma(0)$ be a point on  the curve.
		Let $\gamma$ be the image curve of $\sigma$ by the inverse of  the exponential map at $p$, given by $\gamma(s)=exp_{p}^{-1}(\sigma(s))$ and $p=\gamma(0)$.  Then we have:
		\begin{enumerate}
			\item $$\dot{\gamma}(0)=\dot{\sigma}(0), \qquad \ddot{\gamma}(0)=\Big(\frac{D\dot{\sigma}}{dt}\Big)(0).$$
			\item The geodesic curvature of $\sigma $ at $p$ is equal to the curvature of $\gamma$ at the origin:
			$$
			\kappa_{g,\sigma}(p)=\kappa_{ \gamma}(p).
			$$
		\end{enumerate}
	\end{lem}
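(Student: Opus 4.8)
The plan is to work in geodesic normal coordinates centered at $p$, since the map $\exp_p^{-1}$ is precisely the chart realizing these coordinates on a neighbourhood of $p$. I will exploit two standard properties of the exponential map: $\exp_p(0)=p$ with differential $d(\exp_p)_0=\mathrm{id}_{T_pS}$, and the vanishing of the Christoffel symbols of the induced metric at the centre, $\Gamma^k_{ij}(p)=0$. After fixing an orthonormal basis of $T_pS$, the curve $\gamma=\exp_p^{-1}\circ\sigma$ is identified with the pair of coordinate functions of $\sigma$, a genuine $C^2$ curve in $\R^2$ passing through the origin at $s=0$, whose ordinary derivatives $\dot\gamma,\ddot\gamma$ therefore make sense.

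For the first identity in statement (1), I would differentiate $\gamma=\exp_p^{-1}\circ\sigma$ at $s=0$ by the chain rule, obtaining $\dot\gamma(0)=d(\exp_p^{-1})_p\,\dot\sigma(0)$; since $d(\exp_p)_0=\mathrm{id}$, its inverse differential is also the identity under the canonical identification $T_0(T_pS)\cong T_pS$, giving $\dot\gamma(0)=\dot\sigma(0)$. For the second identity I would expand the covariant derivative in these coordinates as $\big(\tfrac{D\dot\sigma}{dt}\big)^k=\ddot\gamma^k+\Gamma^k_{ij}\,\dot\gamma^i\dot\gamma^j$, where $\gamma^k$ are the coordinate components of $\sigma$. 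Evaluating at $s=0$ and using $\Gamma^k_{ij}(p)=0$ annihilates the quadratic term, so the covariant acceleration collapses to the ordinary (flat) acceleration of $\gamma$, i.e. $\big(\tfrac{D\dot\sigma}{dt}\big)(0)=\ddot\gamma(0)$.

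For statement (2), I would use that $\sigma$ is arc-length parametrized, so $\langle\dot\sigma,\dot\sigma\rangle\equiv 1$ forces $\tfrac{D\dot\sigma}{dt}\perp\dot\sigma$ along $\sigma$; by part (1) this yields $\ddot\gamma(0)\perp\dot\gamma(0)$ with $|\dot\gamma(0)|=1$ in the Euclidean structure of $T_pS$, which is exactly the normal-coordinate metric at the origin. The planar curvature of $\gamma$ at the origin is $\kappa_\gamma=|\dot\gamma\wedge\ddot\gamma|/|\dot\gamma|^3$, which at $s=0$ reduces to $|\ddot\gamma(0)|=\big|\tfrac{D\dot\sigma}{dt}(0)\big|$. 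Comparing with the definition $\kappa_{g,\sigma}(p)=\langle\tfrac{D\dot\sigma}{dt}(0),\mathbf n(p)\wedge\dot\sigma(0)\rangle$ and observing that $\mathbf n(p)\wedge\dot\sigma(0)$ is the unit normal to $\dot\sigma(0)$ in the tangent plane, the right-hand side is the signed length of $\tfrac{D\dot\sigma}{dt}(0)$; choosing the orientation of the normal chart to agree with $\mathbf n$ makes the signs match and delivers $\kappa_{g,\sigma}(p)=\kappa_\gamma(p)$.

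The main obstacle I anticipate is the second-derivative identity in part (1): one must carefully separate the intrinsic covariant acceleration of $\sigma$ from the extrinsic flat acceleration of its coordinate image $\gamma$, and it is only the vanishing of the Christoffel symbols at the centre of the normal chart—together with the fact that the coordinate frame is orthonormal there—that forces the two to coincide. Keeping the identification $T_0(T_pS)\cong T_pS$ honest, and verifying that no first-order distortion of $\exp_p$ contaminates the second derivative, is the delicate point; once this is settled, part (2) is a short computation.
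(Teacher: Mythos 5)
Your proposal is correct and follows essentially the same route as the paper: normal coordinates centred at $p$, the identification via $d(\exp_p)_0=\mathrm{id}$, the vanishing of $\Gamma^k_{ij}$ at $p$ to collapse the covariant acceleration to $\ddot\gamma(0)$, and then the identification of the signed planar curvature of $\gamma$ with $\langle(\tfrac{D\dot\sigma}{dt})(0),\mathbf n(p)\wedge\dot\sigma(0)\rangle$. The only cosmetic difference is that in part (2) you pass through the unsigned formula $|\dot\gamma\wedge\ddot\gamma|/|\dot\gamma|^3$ and then fix orientations, whereas the paper plugs $\ddot\gamma(0)=(\tfrac{D\dot\sigma}{dt})(0)$ and $\dot\gamma(0)=\dot\sigma(0)$ directly into the signed-curvature expression; both amount to the same computation.
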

	\begin{figure}
		
		\begin{minipage}[c]{0.45\linewidth}
			\includegraphics[width=1.2\linewidth]{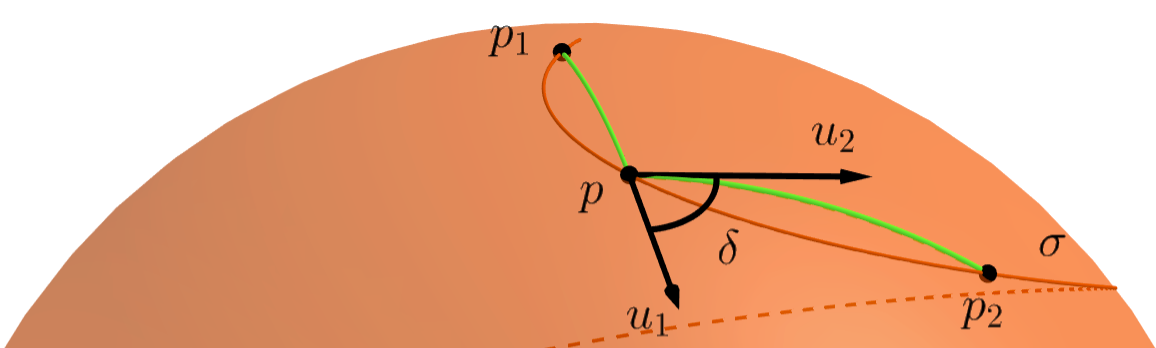}
			\caption{A geodesic V-line inscribed on $\sigma$.}
			\label{fig:p1}
		\end{minipage}
		\hfill
		\hspace*{1.5cm}
		\begin{minipage}[c]{0.5\linewidth}
			\includegraphics[width=0.9\linewidth]{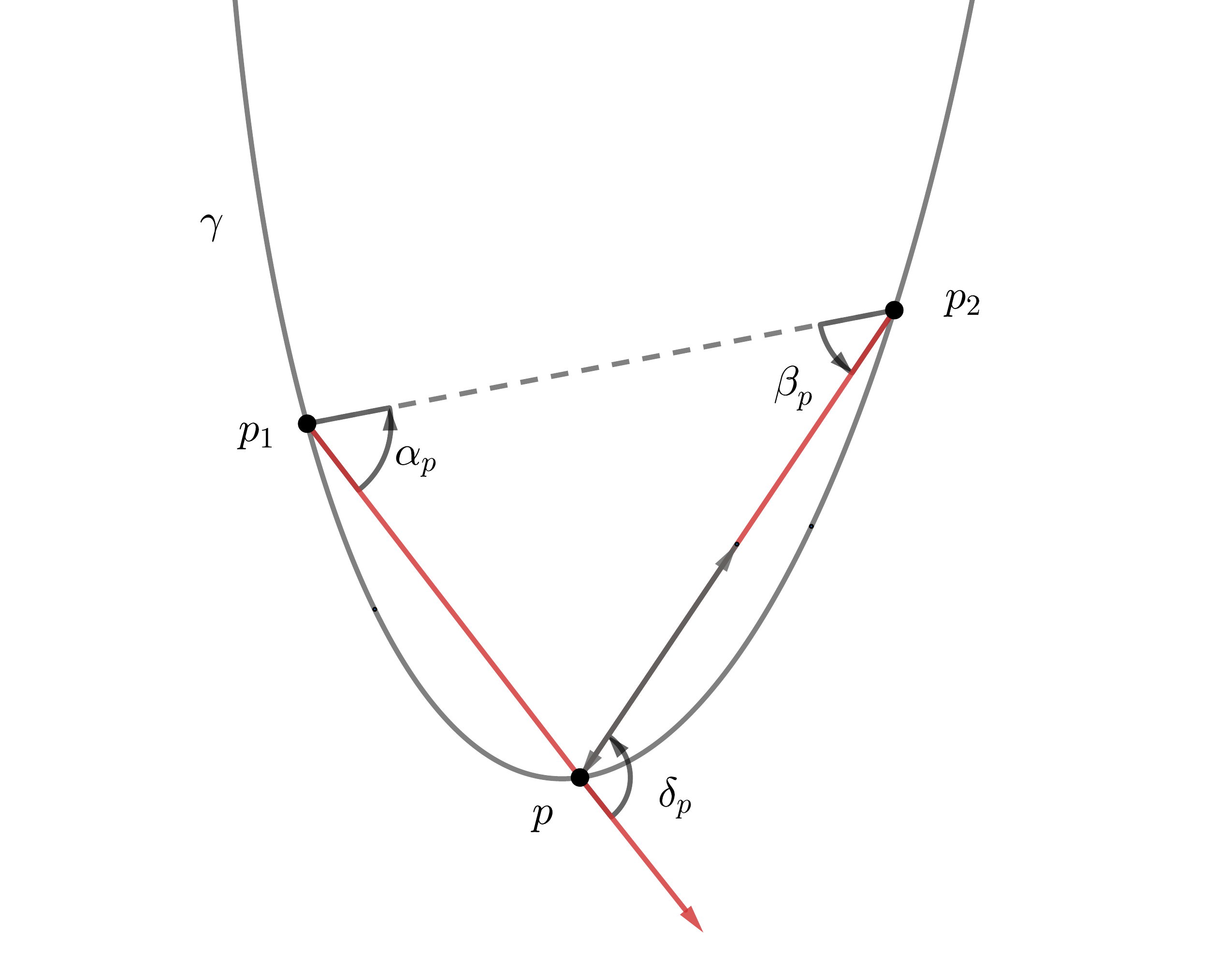}
			\caption{Proof of Lemma \ref{a0}.}
			\label{fig:p2}
		\end{minipage}
	\end{figure}
	\begin{proof}
		\begin{enumerate}
			\item Let $(x^1,x^2)$ be the normal coordinates defined on a neighbourhood of $p$  associated to an orthonormal basis $(e_1,e_2)$ of $T_{p} S$ . Then $exp^{-1}_p(x) = x^1 e_1 +  x^2 e_2$ and $\gamma(s) = \sigma^1(s) e_1 +  \sigma^2(s) e_2$, where
			$\sigma^i= x^i \circ \sigma$. From the fact that  Christoffel's symbols $\Gamma_{i,j}^k(p)=0$, we get
			$$
			\dot{\gamma}(t) = \dot{\sigma^1}(t)  e_1 +  \dot{\sigma^2}(t)   e_2= \dot{\sigma}(t) ,
			$$
			and
			$$
			\Big(\frac{D\dot{\sigma}}{dt}\Big)(0)= \displaystyle \sum_{i,j,k}  \bigg( \ddot{\gamma}^k(0) +\dot{\gamma}^i(0) \dot{\gamma}^j(0) \Gamma_{i,j}^k(p) \bigg) e_k = \ddot{\gamma}(0).
			$$
			\item From \eqref{GeodCurv}, we have
			$$\kappa_{g,\sigma}(p):=<\Big(\frac{D\dot{\sigma}}{dt}\Big)(0),\mathbf{n}(p)\wedge \dot{\sigma}(0)>
			=<\ddot{\gamma}(0),\mathbf{n}(p)\wedge \dot{\gamma}(0)>=\kappa_{ \gamma}(p).
			$$
		\end{enumerate}
	\end{proof}
	
	\begin{proof}[Proof of Theorem \ref{a3}]
		Since radial geodesics are mapped isometrically to straight lines in $T_pS$ by $exp^{-1}_p$ (and so is the angle between them), we have $\kappa_g(p_1,p,p_2)=\displaystyle \frac{2\, \delta}{pq_1+pq_2}$ with $q_i=exp_p^{-1}(p_i)$. Then,  by Lemma \ref{a0} and Lemma \ref{a1}, we conclude the claim.	
	\end{proof}
	\section{Spherical interpolatory geometric subdivision schemes}
	Spherical interpolatory geometric subdivision schemes on the unit sphere \cite{Be} are a generalization of the planar ones \cite{Dy}. Using the exponential map of the sphere and ASA-formula of triangles, one can define geometrically new points on $\mathbb{S}^2$.
	In this section, we recall materials found in \cite{Be}.
	In Section 3.1, we define the angle-based 4-point scheme on the unit sphere. We prove that the proposed scheme is convergent and $G^1$-continuous. We show that the scheme cannot be in general $G^2$-continuous using the discrete geodesic curvature. \\
	The second part of this section will concern a second novel spherical scheme capable of producing $G^2$-continuous curves. The so-called curvature-based 6-point spherical subdivision scheme employs the discrete geodesic curvature to insert new points. \\
	Let $\mathbb{S}^2$ be the oriented sphere and $d$ its spherical distance.
	On the geodesic polygon $P^j:=\{p_i^j / \, \, p_i^j \in \mathbb{S}^2 \}$ at $j^{st}$ iteration, let $T_i^j=\Delta p_{i-1}^jp_i^jp_{i+1}^j $ be the spherical triangle at $i^{st}$ position. We associate (see Fig. \ref{fig:f10}):
	\begin{itemize}
		\item The geodesic $c_{i,k}^j$ of arc length $l_{i,k}^j$ connecting $c_{i,k}^j(0)=p_i^j$ and $c_{i,k}^j(l_{i,k}^j)=p_k^j$.
		\item The edge length:
		\begin{equation}
			e_i^j=d(p_i^j,p_{i+1}^j).
			\label{11}
		\end{equation}
		\item The unit tangent vectors to the geodesics $c_{i,i+1}^j$ and $c_{i-1,i}^j$ at $p_i^j$:
		\begin{equation}
			U_i^j=\dot{c}_{i,i+1}^j(0) \qquad \text{and} \qquad V_i^j=\dot{c}_{i-1,i}^j(l_{i-1,i}^j).
			\label{12}
		\end{equation}
		\item The signed angles:
		\begin{equation}
			\alpha_i^j=\sphericalangle (\dot{c}_{i-1,i+1}^j(0),U_{i-1}^j) \qquad \text{and} \qquad \beta_i^j=\sphericalangle(V_{i+1}^j,\dot{c}_{i-1,i+1}^j(l_{i,i+1}^j)),
			\label{13}
		\end{equation}
		with the signed angular defect:
		\begin{equation}
			\delta_i^j=\sphericalangle (U_i^j,V_i^j).
			\label{14}
		\end{equation}
		\item The signed area $A_i^j$ of $T_i^j$ given by Girard's formula:
		\begin{equation}
			A_i^j=\alpha_i^j+\beta_i^j-\delta_i^j.
			\label{42}
		\end{equation}
	\end{itemize}

			%%%%%%%%%%%%%%%%%%%%%%%%%%%%%%%%%%%%%%%%%%%%%%%%%%%%%%%%%%%%%%%%%%%%%%%%%%%%
	\begin{figure}[H]
		\begin{minipage}[c]{0.5\linewidth}
			\includegraphics[width=1\linewidth]{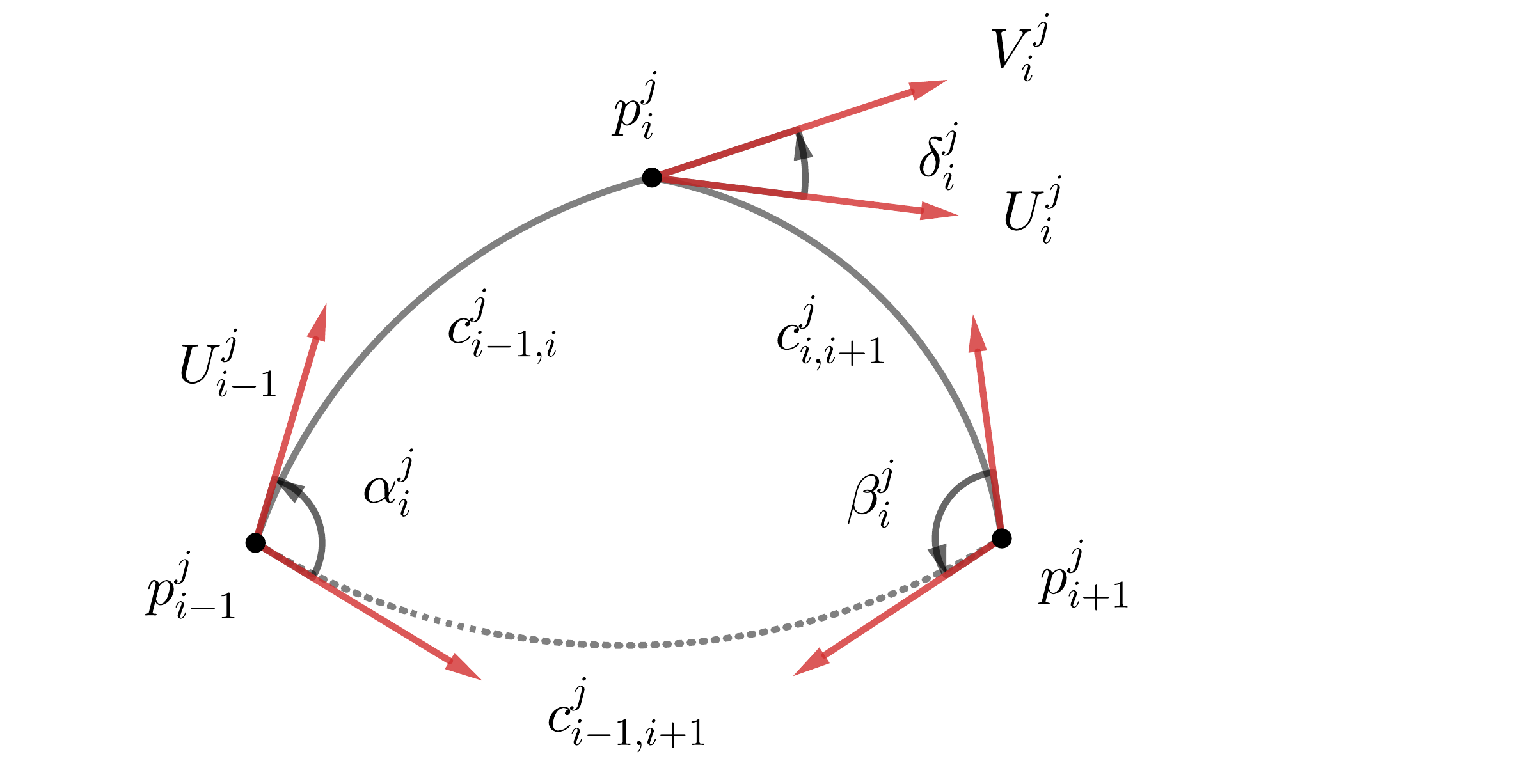}
			\caption{The spherical triangle $T_i^j$.}
			\label{fig:f10}
		\end{minipage}
		\hfill
		\hspace*{-2cm}
		\begin{minipage}[c]{0.5\linewidth}
			\includegraphics[width=0.9\linewidth]{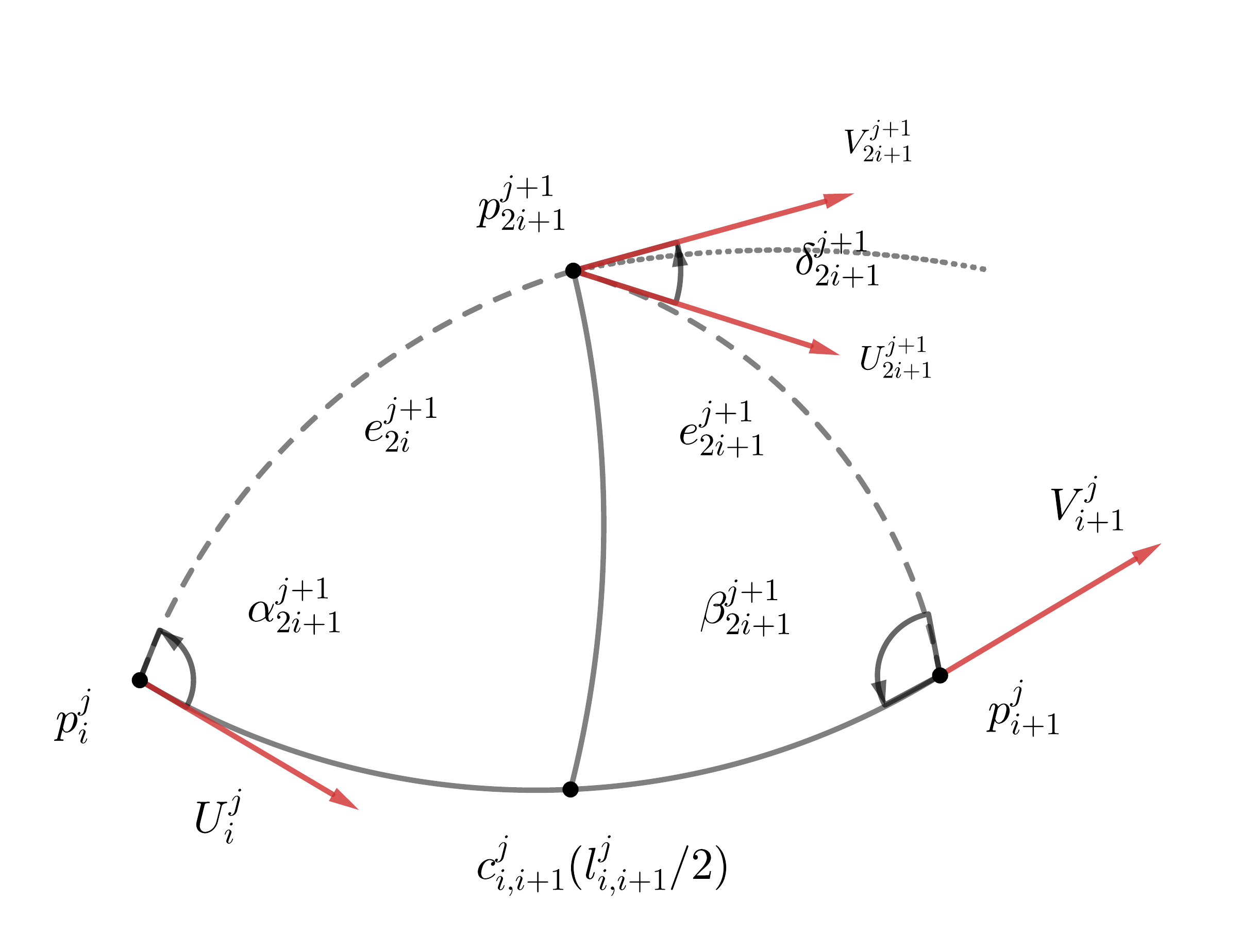}
			\caption{The new point $p_{2i+1}^{j+1}$.}
			\label{fig:f11}
		\end{minipage}
	\end{figure}
To define a spherical interpolatory geometric subdivision scheme, we need to express explicitly the new point $p_{2i+1}^{j+1}$ from the data $\alpha_{2i+1}^{j+1}$, $\beta_{2i+1}^{j+1}$ and $l^j_{i,i+1}$ in the triangle $p_{i}^j\, p_{2i+1}^{j+1}\, p_{i+1}^j$ (see Fig. \ref{fig:f11}). \\
First, the plane $\Pi$ containing $\overrightarrow{op_i^j}$ and $\overrightarrow{op_{2i+1}^{j+1}}$ ($o$ is the centre of the sphere) is given by $\overrightarrow{op^j_{i}}$ and $ R_{p^j_{i},\alpha_{2i+1}^{j+1}}  U^j_{i}$ the rotation of $U^j_{i}$ around $\overrightarrow{op^j_{i}}$ by angle $\alpha^{j+1}_{2i+1}$. \\
To get $p_{2i+1}^{j+1}$, it remains to rotate $\overrightarrow{op^j_{i}}$ on the plane $\Pi$ by angle $e_{2i}^{j+1}$. Then:\\
\begin{equation}
	p_{2i+1}^{j+1}=R_{\vec{n},e_{2i}^{j+1}} \Big(  \overrightarrow{op^j_{i}}  \Big),
	\label{15}
\end{equation}
where  $\vec{n}=\overrightarrow{op^j_{i}} \wedge  R_{p^j_{i},\alpha_{2i+1}^{j+1}}  U^j_{i}$ the normal vector of the plane $\Pi$. \\
In a symmetric way, we have:
\begin{equation}
	p_{2i+1}^{j+1}=R_{\vec{m},e_{2i+1}^{j+1}} \Big(  \overrightarrow{op^j_{i+1}}  \Big),
	\label{15a}
\end{equation}
where  $\vec{m}=\overrightarrow{op^j_{i+1}} \wedge  R_{p^j_{i+1},\pi-\beta_{2i+1}^{j+1}}  V^j_{i+1}$. \\
By taking
\begin{equation}
	p_{2i}^{j+1}=p_i^j,
	\label{odd}
\end{equation}
we have the following definitions:
	\begin{defi}[\cite{Be}]\label{DefSIGS}
		We define a Spherical Interpolatory Geometric Subdivision (SIGS) scheme by \eqref{15} and \eqref{odd} (or equivalently by \eqref{15a} and \eqref{odd}), where $e^{j+1}_{2i}$ and $e_{2i+1}^{j+1}$ are given by ASA-formulas of spherical triangles:
				\begin{equation*}
			\left\{
			\begin{aligned}
				& e_{2i}^{j+1}=   \displaystyle Arctan \Bigg( \displaystyle \frac{2sin(\beta_{2i+1}^{j+1})}{cot( \frac{l^j_{i,i+1}}{2}) sin(\alpha_{2i+1}^{j+1}+\beta_{2i+1}^{j+1})+tan( \frac{l^j_{i,i+1}}{2}) sin(\alpha_{2i+1}^{j+1}-\beta_{2i+1}^{j+1})}\Bigg)            ,\\
				&e_{2i+1}^{j+1}=\displaystyle Arctan \Bigg( \displaystyle \frac{2sin(\alpha_{2i+1}^{j+1})}{cot( \frac{l^j_{i,i+1}}{2}) sin(\alpha_{2i+1}^{j+1}+\beta_{2i+1}^{j+1})+tan( \frac{l^j_{i,i+1}}{2}) sin(\beta_{2i+1}^{j+1}-\alpha_{2i+1}^{j+1})}\Bigg),  \\
			\end{aligned}
			\right.
			\label{}
		\end{equation*}
		 and $\alpha_{2i+1}^{j+1}$ and $\beta_{2i+1}^{j+1}$ are both non-zero, $R_{\alpha_{2i+1}^{j+1},p_i^j}$ the rotation around $\overrightarrow{op_i^j}$ by angle $\alpha_{2i+1}^{j+1}$.\\
		For the special case when $\alpha_{2i+1}^{j+1}=\beta_{2i+1}^{j+1}$ we said that the SIGS scheme is bisector (SBIGS-scheme), and is defined by \eqref{15} and \eqref{odd} (or \eqref{15a} and \eqref{odd}), where:
		$$e_{2i}^{j+1}=e_{2i+1}^{j+1}=\displaystyle Arctan \bigg(\frac{tan(\displaystyle \frac{l^j_{i,i+1}}{2})}{cos(\alpha_{2i+1}^{j+1})}\bigg)$$.
	\end{defi}
Note that Definition \ref{DefSIGS} is similar to the one in \cite{Be}. The only difference stands in Eqs. \eqref{15} and \eqref{15a} in which in \cite{Be} they were expressed in terms of the exponential map of the unit sphere, while in this paper, for simplicity, we propose to write them using rotations.\\
	Now, let $$\delta^j:=\underset{i\in \mathbb{Z}}{sup}\, |\delta_{i}^{j}|.$$
	We recall Corollary 5.1 of \cite{Be} that provides sifficient condition for the convergence and $G^1$-continuity of SBIGS-schemes:
	\begin{theo}[\cite{Be}]\label{co1}
		If the sequence $\{\delta^j\}_j$ is summable, then the SBIGS-scheme is convergent and the limit curve is $G^1$-continuous.
	\end{theo}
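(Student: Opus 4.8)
The plan is to run the classical two-step argument for geometric subdivision: first prove convergence of the refined polygons to a continuous limit curve, and then upgrade this to $G^1$-continuity by showing that the tangent directions converge to a continuous limit field. To set up, I would attach to each vertex $p_i^j$ the dyadic parameter $t_i^j=i\,2^{-j}$ and let $F_j$ be the piecewise-geodesic curve obtained by joining consecutive vertices $p_i^j,p_{i+1}^j$ by their minimizing geodesic arc, parametrized affinely on $[t_i^j,t_{i+1}^j]$. Because the scheme is interpolatory ($p_{2i}^{j+1}=p_i^j$ by \eqref{odd}), $F_{j+1}$ coincides with $F_j$ at every level-$j$ dyadic parameter, so the whole analysis reduces to estimating, over each old edge, the deviation of the inserted point $p_{2i+1}^{j+1}$ and the turning of the adjacent tangents.

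The first step is the geometric contraction of the edge lengths $e^j:=\sup_i e_i^j$. Applying Girard's formula \eqref{42} to the insertion triangle $p_i^j\,p_{2i+1}^{j+1}\,p_{i+1}^j$, whose apex defect at $p_{2i+1}^{j+1}$ is $\delta_{2i+1}^{j+1}$, gives $\delta_{2i+1}^{j+1}=\alpha_{2i+1}^{j+1}+\beta_{2i+1}^{j+1}-A$ with $A$ the spherical area; in the bisector case $\alpha=\beta$ this yields $|\alpha_{2i+1}^{j+1}|\le \tfrac12\delta^{j+1}+O(A)$, where $A=O((e^j)^2\,\delta^{j+1})$ is of higher order. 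Since $\{\delta^j\}$ is summable it tends to $0$, so the insertion angles tend to $0$ and $\cos(\alpha_{2i+1}^{j+1})$ stays bounded away from $0$. Inserting this into the bisector length formula $e_{2i}^{j+1}=e_{2i+1}^{j+1}=\arctan\big(\tan(e_i^j/2)/\cos(\alpha_{2i+1}^{j+1})\big)$ of Definition \ref{DefSIGS} and expanding for small edges, one obtains a contraction estimate of the form $e^{j+1}\le\big(\tfrac12+c\,\delta^{j+1}\big)e^j$; as $\sum_j\delta^j<\infty$, these factors have partial products decaying like $2^{-j}$, so $e^j\to 0$ and $\sum_j e^j<\infty$. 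The inserted point lies in the triangle over $p_i^j p_{i+1}^j$ at spherical distance $O(e_i^j\,\delta^{j+1})$ from that edge, whence $\|F_{j+1}-F_j\|_\infty=O(e^j)$; summability of $\{e^j\}$ makes $\{F_j\}$ uniformly Cauchy, and its limit $F_\infty$ is a continuous curve interpolating all the $p_i^j$. This proves convergence.

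For $G^1$-continuity I would track the unit tangents $U_i^j,V_i^j$ of \eqref{12}. At a retained vertex $p_{2i}^{j+1}=p_i^j$ the outgoing direction changes, from level $j$ to $j+1$, by exactly the insertion angle $\alpha_{2i+1}^{j+1}$ (the angle at $p_i^j$ between the geodesic toward $p_{i+1}^j$ and the one toward $p_{2i+1}^{j+1}$), hence by at most $\tfrac12\delta^{j+1}+O((e^j)^2)$; the incoming direction is controlled symmetrically. Thus the direction attached to a fixed dyadic parameter is a Cauchy sequence whose increments are dominated by $\delta^{j}$, so by summability it converges to a limit direction $\tau(t)$ with tail bound $\angle(U_i^j,\tau(t_i^j))=O\big(\sum_{k\ge j}\delta^k\big)$. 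To see that $\tau$ is continuous, given parameters $s<t$ I would pick the level $j$ with $2^{-j}\sim t-s$: then $\tau(s)$ and $\tau(t)$ each lie within $O\big(\sum_{k\ge j}\delta^k\big)$ of the corresponding level-$j$ edge directions, while those two edge directions differ by the turning across the $O(1)$ intervening vertices, which is $O(\delta^j)$. Hence $\angle(\tau(s),\tau(t))$ is bounded by a tail of the summable series and tends to $0$ as $t-s\to0$, so $\tau$ is continuous and is the unit tangent of $F_\infty$; this is exactly $G^1$-continuity in the sense of \cite{Be}. The main obstacle is carrying these estimates out uniformly on the curved sphere: one must show, via the ASA relations and Girard's formula \eqref{42}, that the spherical-excess (area) terms are genuinely higher-order and uniform in $j$, so that the relation $\alpha_{2i+1}^{j+1}\approx\tfrac12\delta_{2i+1}^{j+1}$ — which drives both the edge contraction and the tangent telescoping — holds robustly, and then to combine the local turning with the convergence tails into a single modulus of continuity for $\tau$.
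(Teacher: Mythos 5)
The paper itself offers no proof of this statement: it is imported verbatim as Corollary 5.1 of \cite{Be}, so there is no in-paper argument to compare yours against. Your two-step outline --- geometric contraction of $e^j$ via the bisector ASA length formula together with Girard's relation $2\alpha_{2i+1}^{j+1}=\delta_{2i+1}^{j+1}+A_{2i+1}^{j+1}$ (which gives $\alpha_{2i+1}^{j+1}\approx\tfrac12\delta_{2i+1}^{j+1}$ once the area term is shown to be of order $(e^j)^2\delta^{j+1}$), yielding a uniformly Cauchy sequence of piecewise-geodesic interpolants, followed by telescoping of the tangent directions at dyadic points with increments dominated by the summable sequence $\{\delta^j\}_j$ --- is the standard argument for interpolatory geometric schemes of this type and is consistent with the auxiliary facts this paper does quote from \cite{Be} (e.g.\ $\lim_j e^j=\lim_j A^j=0$ from its Corollary 4.1 and Proposition 4.4). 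The outline appears sound; the only points needing genuine care are the ones you already flag, namely making the spherical-excess corrections uniform in $j$ (avoiding circularity in bounding $\alpha_{2i+1}^{j+1}$ by $\delta^{j+1}$ before the edges are known to be small) and using parallel transport when comparing tangent directions at distinct points in the continuity argument for $\tau$.
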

	\subsection{The spherical angle-based 4-point scheme}
	The spherical angle-based 4-point scheme is a SBIGS-scheme in which the new angles are given in terms of two consecutive angular defects $\delta_i^j$ and $\delta_{i+1}^j$ of the four points $p_{i-1}^j,\, p_{i}^j, \, p_{i+1}^j$ and $p_{i+2}^j$. Namely:
	\begin{equation}
		\alpha_{2i+1}^{j+1}=\beta_{2i+1}^{j+1}=\displaystyle \frac{\delta_i^j+\delta_{i+1}^j}{8}.
		\label{41}
	\end{equation}
	From \eqref{42}, we have
	\begin{equation}
		A_{2i+1}^{j+1}=2\alpha_{2i+1}^{j+1}-\delta_{2i+1}^{j+1}.
		\label{46}
	\end{equation}
	From \eqref{41}, we get $$\delta_{2i+1}^{j+1}=\displaystyle\frac{1}{4}(\delta_i^j+\delta_{i+1}^j)-A_{2i+1}^{j+1}.$$
	As we see in Fig. \ref{fig:p5}, we have
	$$\delta_{2i}^{j+1}=\delta_i^j-\alpha_{2i+1}^{j+1}-\alpha_{2i-1}^{j+1}.$$
	Then
	$$ \delta_{2i}^{j+1}=\displaystyle\frac{1}{8}(-\delta_{i-1}^j+6\delta_i^j-\delta_{i+1}^j).$$
	Consequently
	\begin{equation}
		\left\{
		\begin{aligned}
			&\delta_{2i+1}^{j+1}=\displaystyle\frac{1}{4}(\delta_i^j+\delta_{i+1}^j)-A_{2i+1}^{j+1},           \\
			&\delta_{2i}^{j+1}=\displaystyle\frac{1}{8}(-\delta_{i-1}^j+6\delta_i^j-\delta_{i+1}^j).   \\
		\end{aligned}
		\right.
		\label{36}
	\end{equation}
	\begin{figure}
		\centering
		\includegraphics[width=0.7\linewidth]{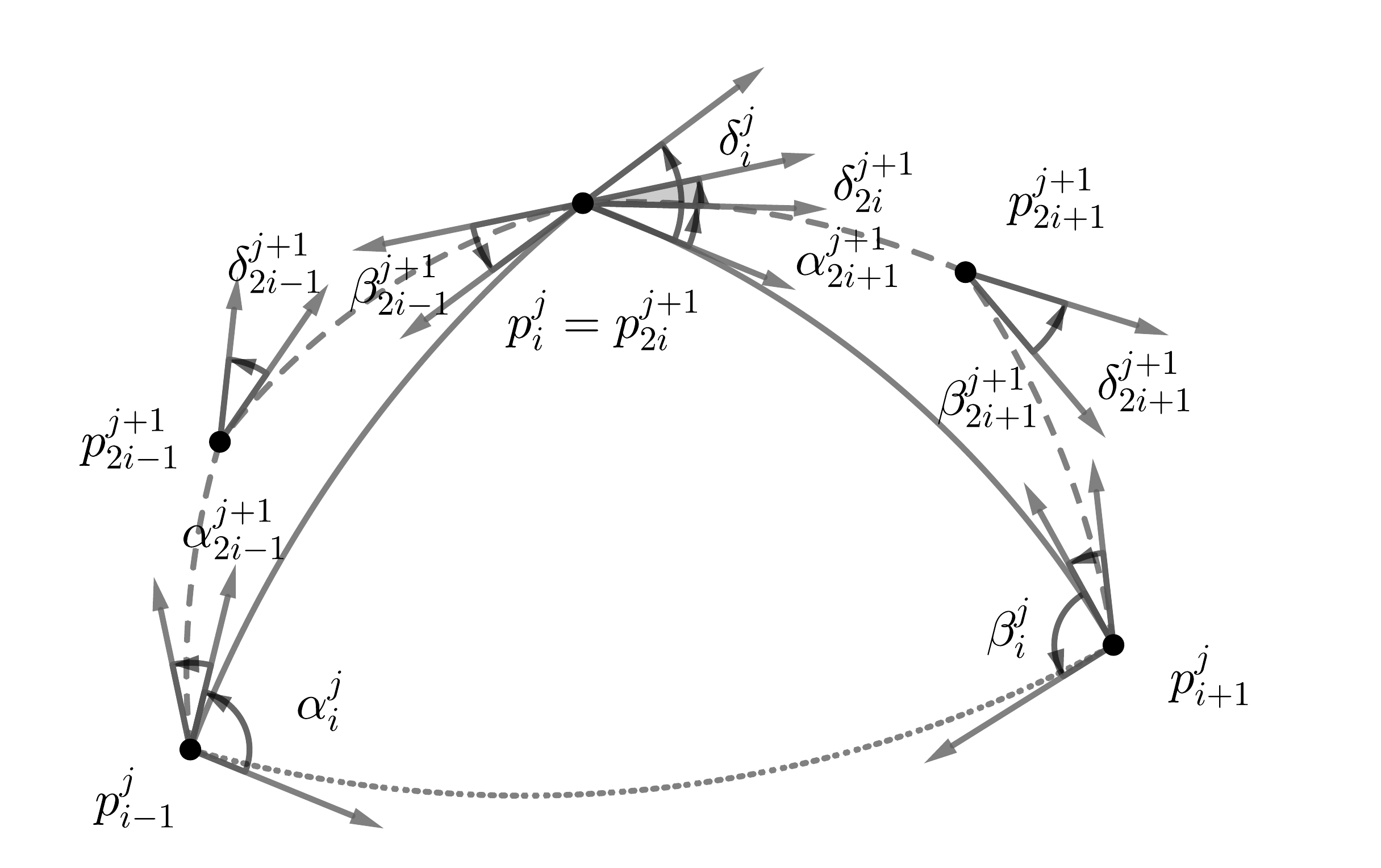}
		\caption{A first iteration on the triangle $T_i^j$.}
		\label{fig:p5}
		
	\end{figure}
	
	In virtue of Theorem \ref{co1}, the summability of the sequence $\{  \delta^j\}_j$ is sufficient for the convergence and $G^1$-continuity of the 4-point SBIGS-scheme. To prove that $\{\delta^j \}_j$ is summable, it is sufficient to show that for some $\nu<1$ and $p\geqslant 0$, we have $\delta^{j+p}\leqslant \nu \, \delta^j$.
	\begin{theo}\label{a5}
		The 4-point SBIGS-scheme is convergent and $G^1$-continuous.
	\end{theo}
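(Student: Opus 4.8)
The plan is to reduce everything to the summability criterion already isolated before the statement: by Theorem \ref{co1} it suffices to prove that $\{\delta^j\}_j$ is summable, and for this it is enough to exhibit $p\geq 1$ and $\nu<1$ with $\delta^{j+p}\leq \nu\,\delta^j$. A single step cannot work, since the even rule in \eqref{36} has mask $\tfrac18(-1,6,-1)$ whose $\ell^1$-norm is exactly $1$; so I would take $p=2$ and compose \eqref{36} with itself.

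First I would expand the four parity combinations $\delta_{4i}^{j+2},\delta_{4i+1}^{j+2},\delta_{4i+2}^{j+2},\delta_{4i+3}^{j+2}$ by substituting the level-$(j+1)$ values from \eqref{36} into the level-$(j+2)$ rules. Collecting the coefficients of $\delta_{i-1}^j,\dots,\delta_{i+2}^j$ and discarding for the moment the area terms $A$, one obtains four fixed masks whose $\ell^1$-norms are $\tfrac34$ (even--even), $\tfrac5{16}$ (odd--even), $\tfrac14$ (even--odd) and $\tfrac5{16}$ (odd--odd). Taking suprema over $i$ and using $\delta^j=\sup_i|\delta_i^j|$, the worst case gives the purely linear estimate $\delta^{j+2}\leq \tfrac34\,\delta^j$. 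In the planar angle-based scheme the triangles are flat, every $A$ vanishes, and this already proves the claim with $\nu=\tfrac34$; the whole difficulty is therefore the spherical correction carried by the areas $A_{2i+1}^{j+1}$, $A_{4i+1}^{j+2}$, $A_{4i+3}^{j+2}$ that appear whenever an odd rule is used.

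The core of the proof is thus to show that these signed areas are of higher order. Here I would use that, for a \emph{bisector} step, the triangle $T_{2i+1}^{j+1}=\Delta\,p_i^j\,p_{2i+1}^{j+1}\,p_{i+1}^j$ is isosceles with base the geodesic of length $l_{i,i+1}^j=e_i^j$ and two equal base angles $\alpha_{2i+1}^{j+1}=\beta_{2i+1}^{j+1}=\tfrac18(\delta_i^j+\delta_{i+1}^j)$, so that by \eqref{41} the base angle is $O(\delta^j)$. When $\delta^j=0$ the polygon lies on a great circle and the triangle degenerates to an arc, so $A=0$ by \eqref{42}; quantitatively, estimating the spherical excess of this thin isosceles triangle gives $|A_{2i+1}^{j+1}|\leq C\,(e^j)^2\,\delta^j$ (to leading order its excess behaves like the Euclidean area $\tfrac14 (e_i^j)^2\,\alpha_{2i+1}^{j+1}$). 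Combining this with the one-step bound $\delta^{j+1}\leq (1+C(e^j)^2)\,\delta^j$ (which controls the level-$(j+1)$ and level-$(j+2)$ areas and defects in terms of $\delta^j$) and with $e^{j+1},e^{j+2}\leq e^j$, the composed inequality becomes $\delta^{j+2}\leq\big(\tfrac34+C'(e^j)^2\big)\,\delta^j$.

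It remains to make the residual term disappear, which is where I expect the main technical obstacle. From the bisector length rule one has $e_{2i}^{j+1}=e_{2i+1}^{j+1}=\arctan\!\big(\tan(\tfrac{l_{i,i+1}^j}{2})/\cos\alpha_{2i+1}^{j+1}\big)$, so for bounded angles the edges contract, $e^{j+1}\leq\kappa\,e^j$ with $\kappa<1$, and hence $e^j\to 0$ geometrically. Consequently there is $J_0$ with $\tfrac34+C'(e^j)^2\leq\nu<1$ for all $j\geq J_0$, which yields $\delta^{j+2}\leq\nu\,\delta^j$ on the tail and therefore summability of $\{\delta^j\}_j$; Theorem \ref{co1} then gives convergence and $G^1$-continuity. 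The only delicate point is to organize these estimates so as to avoid circularity --- the area bound needs the angles (hence $\delta^j$) to stay bounded, while boundedness of $\delta^j$ follows from the very same one-step estimate --- which is cleanly handled by assuming the initial polygon dense enough ($\delta^0$ small and $e^0$ bounded), exactly as the planar $4$-point scheme requires a restriction on the initial angles.
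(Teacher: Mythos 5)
Your reduction of the statement to the summability of $\{\delta^j\}_j$ via Theorem \ref{co1}, and the idea of composing the rules \eqref{36} over two refinement steps because the even mask $\tfrac18(-1,6,-1)$ has $\ell^1$-norm exactly $1$, is precisely the paper's strategy; your composed masks and the planar constant $\tfrac34$ are correct (the paper bounds the two levels separately and lands on the slightly weaker constant $\tfrac78$, which is immaterial). Where you diverge --- and where a genuine gap opens --- is in the treatment of the spherical areas. You handle $A_{2i+1}^{j+1}$ as a perturbation to be estimated by $|A_{2i+1}^{j+1}|\leq C(e^j)^2\delta^j$, which forces you to control $e^j$, which requires the angles (hence $\delta^j$) to be already under control; you acknowledge this circularity yourself and propose to break it by assuming $\delta^0$ small and the initial polygon dense. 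That hypothesis is not in the statement of the theorem, the area estimate is asserted rather than proved, and the bootstrap that would make the tail argument rigorous is only sketched. As written, the proposal proves a weaker, conditional statement.

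The paper disposes of the area term with a single sign observation that your argument misses. By Girard's formula \eqref{46} one has $\delta_{2i+1}^{j+1}=2\alpha_{2i+1}^{j+1}-A_{2i+1}^{j+1}$, and for the isosceles triangle inserted by a bisector step the quantities $\alpha_{2i+1}^{j+1}$, $A_{2i+1}^{j+1}$ and $\delta_{2i+1}^{j+1}$ all carry the same sign, so that
\begin{equation*}
|\delta_{2i+1}^{j+1}|=2|\alpha_{2i+1}^{j+1}|-|A_{2i+1}^{j+1}|\leq 2|\alpha_{2i+1}^{j+1}|\leq \tfrac14\bigl(|\delta_i^j|+|\delta_{i+1}^j|\bigr)
\end{equation*}
by \eqref{41}: the spherical excess can simply be dropped because it only \emph{decreases} the odd defect. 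No bound on its size, no control of $e^j$, and no hypothesis on $P^0$ are needed; combining this with the even rule over two steps immediately yields $\delta^{j+2}\leq\tfrac78\,\delta^j$ unconditionally, and Theorem \ref{co1} finishes the proof. If you insist on your perturbative route you must actually prove the area bound (e.g.\ from the excess formula for the thin isosceles triangle) and carry out an induction propagating the bounds on $\delta^j$ and $e^j$ simultaneously; the sign argument renders all of that unnecessary.
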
	
	\begin{proof}
		Since all angles in \eqref{46} have the same sign, we can write:
		$$|\delta_{2i+1}^{j+1}|=2|\alpha_{2i+1}^{j+1}|-|A_{2i+1}^{j+1}|.$$
		Then from \eqref{41} we get
		\begin{equation}
			|\delta_{2i+1}^{j+1}| \leqslant 2|\alpha_{2i+1}^{j+1}| \leqslant \displaystyle
			\frac{1}{4} \, (|\delta_i^j|+|\delta_{i+1}^j|).
			\label{33}
		\end{equation}
		Hence
		\begin{equation}
			|\delta_{2i+1}^{j+1}| \leqslant \displaystyle \frac{1}{2} \, \delta^j.
			\label{27}
		\end{equation}
		On the other hand, by \eqref{36}, we have
		\begin{equation}
			|\delta_{2i}^{j+1}|  \leqslant \displaystyle  \frac{1}{8}|\delta_{i-1}^j|+ \frac{6}{8}|\delta_i^j|+\frac{1}{8}|\delta_{i+1}^j| \leqslant \delta^j.
			\label{29}
		\end{equation}
		For the $j+2^{st}$ step, we have by \eqref{33}:
		\begin{equation*}
			|\delta_{4i+1}^{j+2}| \leqslant \displaystyle \frac{1}{4} \Big( |\delta_{2i}^{j+1}|+|\delta_{2i+1}^{j+1}| \Big), \qquad
			|\delta_{4i+3}^{j+2}| \leqslant \displaystyle \frac{1}{4} \Big( |\delta_{2i+1}^{j+1}|+|\delta_{2i+2}^{j+1}| \Big).
		\end{equation*}
		Using \eqref{27} and \eqref{29} we get
		\begin{equation}
			|\delta_{4i+1}^{j+2}| \leqslant \displaystyle \frac{3}{8} \delta^j, \qquad
			|\delta_{4i+3}^{j+2}| \leqslant \displaystyle \frac{3}{8} \delta^j.
			\label{31}
		\end{equation}
		Likewise
		\begin{equation*}
			\delta_{4i}^{j+2}=\displaystyle \frac{6}{8} \delta_{2i}^{j+1}-\frac{1}{8} \delta_{2i-1}^{j+1}-\frac{1}{8} \delta_{2i+1}^{j+1}, \qquad
			\delta_{4i+2}^{j+2}=\displaystyle \frac{6}{8} \delta_{2i+1}^{j+1}-\frac{1}{8} \delta_{2i}^{j+1}-\frac{1}{8} \delta_{2i+2}^{j+1} .
		\end{equation*}
		Then
		\begin{equation}
			|\delta_{4i}^{j+2}| \leqslant \displaystyle \frac{7}{8} \delta^j , \qquad
			|\delta_{4i+2}^{j+2}| \leqslant \displaystyle \frac{5}{8} \delta^j.
			\label{32}
		\end{equation}
		Finally, from \eqref{31} and \eqref{32} we conclude that
		$$\delta^{j+2} \leqslant \displaystyle \frac{7}{8} \, \delta^j.$$
		And the summability of $\{\delta^j \}_j$ holds.
	\end{proof}
	Now, recall the following theorem about small spherical triangles:
	\begin{theo}[ \cite{Le}, \cite{Na}]\label{c12}
		Let $T$ be a spherical triangle with small geodesic lengths $l_i$ , $i\in \{1,2,3\}$ and $\tilde{T}$ be the Euclidean triangle with the same lengths as those of $T$. If $\alpha_i$, $i\in \{1,2,3\}$, denote the geodesic angles of $T$, and $\tilde{\alpha_i}$ the corresponding angles of $\tilde{T}$. Then for $i \in \{1,2,3\}$ we have:
		\begin{equation}
			\tilde{\alpha_i}=\alpha_i-\displaystyle \frac{A(T)}{3}+o(l^4)
			\label{26},
		\end{equation}
		where $A(T)$ is the area of $T$ and $l=\underset{i}{sup} \, l_i$.
	\end{theo}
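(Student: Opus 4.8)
The plan is to establish the statement for a single angle, say $\alpha_1$ (opposite the side of length $l_1$, with the remaining sides $l_2,l_3$); the other two angles then follow by permuting the roles of $l_1,l_2,l_3$. I would compare $T$ and $\tilde T$ through their common side lengths by means of the two laws of cosines. On the unit sphere,
\[
\cos\alpha_1=\frac{\cos l_1-\cos l_2\cos l_3}{\sin l_2\,\sin l_3},
\qquad
\cos\tilde\alpha_1=\frac{l_2^2+l_3^2-l_1^2}{2\,l_2\,l_3}.
\]
Since both angles are $O(1)$, the idea is to Taylor expand the spherical expression in the small lengths and extract the discrepancy $\cos\alpha_1-\cos\tilde\alpha_1$, which should turn out to be of order $O(l^2)$.

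Carrying out the expansion with $\cos x=1-\tfrac{x^2}{2}+\tfrac{x^4}{24}+\cdots$ and $\sin x=x\bigl(1-\tfrac{x^2}{6}+\cdots\bigr)$ and clearing the factor $l_2 l_3$, I expect the fourth-order part of the numerator to collapse, through the Heron-type identity
\[
2l_1^2l_2^2+2l_2^2l_3^2+2l_3^2l_1^2-l_1^4-l_2^4-l_3^4=16\,\tilde K^2 ,
\]
into a multiple of $\tilde K^2$, where $\tilde K$ is the Euclidean area of $\tilde T$; concretely $\cos\alpha_1-\cos\tilde\alpha_1=-\tfrac{2\tilde K^2}{3\,l_2 l_3}+O(l^4)$. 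Inverting the cosine via $\cos\alpha_1-\cos\tilde\alpha_1=-\sin\tilde\alpha_1\,(\alpha_1-\tilde\alpha_1)+O(l^4)$ and substituting $\tilde K=\tfrac12 l_2 l_3\sin\tilde\alpha_1$ then yields the clean leading relation $\alpha_1-\tilde\alpha_1=\tfrac{\tilde K}{3}+O(l^4)$, which is symmetric in the three sides and so holds verbatim for each angle.

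It remains to trade the Euclidean area $\tilde K$ for the spherical area $A(T)$. Summing the three per-angle relations and invoking Girard's formula together with the Euclidean angle sum,
\[
\sum_i\alpha_i=\pi+A(T),\qquad \sum_i\tilde\alpha_i=\pi ,
\]
gives $A(T)=\sum_i(\alpha_i-\tilde\alpha_i)=\tilde K+O(l^4)$, so that $\tilde K/3=A(T)/3+O(l^4)$ and the asserted formula $\tilde\alpha_i=\alpha_i-\tfrac{A(T)}{3}+o(l^4)$ follows.

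The main obstacle is the precision of the remainder. The computation above pins the discrepancy down only to order $O(l^4)$, whereas the statement claims $o(l^4)$: securing this requires pushing the expansions of the spherical law of cosines (and of the area) one order further, to $l^6$, and checking that the surviving fourth-order contributions to $\alpha_i-\tilde\alpha_i$ match those of $A(T)/3$ for each angle separately. The symmetry exposed by the Heron identity and the exact constraint furnished by Girard's formula (which forces the three corrections to sum correctly) are the levers one would use, but verifying the angle-by-angle cancellation is the delicate point, and it is precisely this higher-order bookkeeping that is carried out in \cite{Le} and \cite{Na}.
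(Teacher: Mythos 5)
The paper offers no proof of Theorem \ref{c12}: it is quoted as a classical result from \cite{Le} and \cite{Na}, so there is nothing internal to compare your argument against. On its own terms your derivation is the standard one and is correct as far as it goes: expanding the spherical law of cosines, collapsing the fourth-order part of the numerator via the Heron identity to $-16\tilde{K}^2$, inverting through $\cos\alpha_1-\cos\tilde{\alpha}_1=-\sin\tilde{\alpha}_1\,(\alpha_1-\tilde{\alpha}_1)+O(l^4)$ (legitimate because $\alpha_1-\tilde{\alpha}_1=O(l^2)$, though it silently requires $\sin\tilde{\alpha}_1$ bounded away from $0$, i.e.\ a non-degenerate triangle), and trading $\tilde{K}$ for $A(T)$ by Girard's formula all check out and yield $\tilde{\alpha}_i=\alpha_i-\frac{A(T)}{3}+O(l^4)$.

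The step you flag as the delicate one --- promoting $O(l^4)$ to $o(l^4)$ --- is more than delicate: the angle-by-angle cancellation you would need does not in fact occur. The refinement of Legendre's theorem (Buzengeiger's formula, the form actually treated in \cite{Na}) carries an explicit fourth-order correction, $\alpha_i-\tilde{\alpha}_i=\frac{A(T)}{3}+\frac{A(T)}{180}\,(m^2-l_i^2)+O(l^6)$ with $m^2=\frac{1}{3}(l_1^2+l_2^2+l_3^2)$ and $l_i$ the side opposite $\alpha_i$; the three corrections sum to zero, consistently with Girard, but each one is generically a genuine $\Theta(l^4)$ quantity unless $3l_i^2=l_1^2+l_2^2+l_3^2$. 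So the remainder in \eqref{26} should read $O(l^4)$ rather than $o(l^4)$; what you prove is the correct form of the statement, and the weakening is harmless for the only use the paper makes of Theorem \ref{c12}, namely \eqref{23} in the proof of Theorem \ref{a6}, where the remainder is divided by a single power of $e_0^j$ and only needs to be $o(e^j)$.
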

To define the $G^2$-continuity of spherical curves, we need to assume that the curve is $G^1$-continuous in the sense of \cite{Be}.
\begin{defi}
	Let $C$ be a $G^1$-continuous curve on the sphere and let $p$, $p_1$ and $p_2$ three points on the curve. $C$ is said to be $G^2$-continuous at $p$ if the limit $\displaystyle \lim_{\substack{ p_1,p_2 \to p \\ p_1, p_2 \in C} }\, \kappa_g(p_1,p,p_2):=\kappa_g(p)$ exists, when $p_1$ approaches p along the curve from the left and $p_2$ approaches p along the curve from the right. \\
	$C$ is said to be $G^2$-continuous if the map $p \in C \to \kappa_g(p)$ is well defined and continuous.
\end{defi}
	Let \begin{equation}
		\kappa_i^j:= \kappa_g(p_{i-1}^j,p_i^j,p_{i+1}^j)=\displaystyle \frac{2 \delta_i^j}{e_{i-1}^j+e_i^j}.
		\label{45}
	\end{equation}
	\begin{theo}\label{a6}
		The 4-point SBIGS-scheme is not in general $G^2$-continuous.
	\end{theo}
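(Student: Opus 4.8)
The plan is to disprove $G^2$-continuity by an explicit counterexample: I take the initial geodesic polygon $P^0$ to be a single ``corner'', i.e.\ two short geodesic arcs of common length $\eta$ meeting at a vertex $p_0^0$ with interior angular defect $\theta:=\delta_0^0\neq 0$, all remaining points being placed along these two arcs so that $\delta_i^0=0$ for $i\neq 0$. Since even-indexed vertices are retained ($p_0^{j}=p_0^0$ for all $j$ by \eqref{odd}), the object to study is the discrete geodesic curvature $\kappa_0^j$ of \eqref{45} at this fixed point, and the goal is to prove $\kappa_0^j\to\infty$, which contradicts the very definition of $G^2$-continuity at $p_0^0$. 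Convergence and $G^1$-continuity of the limit are guaranteed by Theorem \ref{a5}, so $G^2$-continuity is a meaningful property to deny.

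Because the scheme is interpolatory and convergent, the two edges at $p_0^0$ shrink like $e_{-1}^j+e_0^j\sim s\,2^{-j}$ with $s>0$, so by \eqref{45} it suffices to analyse the rescaled defects $K_i^j:=2^{j}\delta_i^j$; one has $\kappa_0^j\sim (2/s)\,K_0^j$. Multiplying the recursion \eqref{36} by $2^{j+1}$ turns it into $K_{2i}^{j+1}=\tfrac14(-K_{i-1}^j+6K_i^j-K_{i+1}^j)$ and $K_{2i+1}^{j+1}=\tfrac12(K_i^j+K_{i+1}^j)-\varepsilon_{i}^{\,j}$, where $\varepsilon_i^{\,j}:=2^{j+1}A_{2i+1}^{j+1}$ collects the spherical (area) correction and the even rule is exact. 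The datum is symmetric about $p_0^0$, hence $K_{-1}^j=K_1^j$, and the central equations close: with $g_j:=K_0^j-K_1^j$ a direct subtraction gives
\begin{equation*}
	g_{j+1}=g_j+\varepsilon_0^{\,j},\qquad K_0^{j+1}=K_0^j+\tfrac12 g_j .
\end{equation*}
Therefore $g_j\to g_\infty:=\theta+\sum_{j\ge0}\varepsilon_0^{\,j}$ and $K_0^j=\tfrac12 g_\infty\,j+O(1)$, so $K_0^j$ (and hence $\kappa_0^j$) diverges linearly in the subdivision level whenever $g_\infty\neq0$. In the planar case $A_{2i+1}^{j+1}\equiv0$, so $g_\infty=\theta\neq0$ and the divergence is immediate; this already shows the planar angle-based scheme is not $G^2$.

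It remains to certify $g_\infty\neq0$ on the sphere. The triangles $T_1^{j+1}$ sit within a ball of radius $O(\eta\,2^{-j})$ around $p_0^0$, so Girard's formula \eqref{42} together with the small-triangle expansion of Theorem \ref{c12} bounds their signed areas by $A_1^{j+1}=O(\eta^2 2^{-2j})$; hence $\varepsilon_0^{\,j}=O(\eta^2 2^{-j})$ and $\sum_{j}\varepsilon_0^{\,j}=O(\eta^2)$. Consequently $g_\infty=\theta\,(1+O(\eta^2))$, which is nonzero once the arm length $\eta$ is chosen small enough (with $\theta$ fixed in the small-triangle regime). This forces $\kappa_0^j\to\infty$, so the $G^1$-limit carries unbounded geodesic curvature at $p_0^0$ and cannot be $G^2$.

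The main obstacle is the uniform control of the spherical corrections $\varepsilon_0^{\,j}$ along the whole orbit: I must make the area estimate $A_1^{j+1}=O(\eta^2 2^{-2j})$ rigorous and uniform in $j$ (using that interpolation keeps every iterate inside the $O(\eta)$-cap around $p_0^0$, where the unit sphere is $O(\eta^2)$-close to its tangent plane), and I must verify that the reflection symmetry $K_{-1}^j=K_1^j$ is preserved by these corrections so that the two-term system for $(K_0^j,K_1^j)$ genuinely decouples. A secondary technical point is to pin down the edge asymptotics $e_{-1}^j+e_0^j\sim s\,2^{-j}$ with $s>0$, which is what transfers the divergence of $K_0^j$ to that of $\kappa_0^j$.
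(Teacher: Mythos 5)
Your proposal is correct, and it rests on the same basic mechanism as the paper's proof: a reflection-symmetric initial polygon whose angular defect jumps at the retained central vertex (cf. \eqref{odd}), so that $2^{j}\delta_0^{j}$ grows linearly in $j$ while the two incident edges shrink like $2^{-j}$, forcing $\kappa_0^{j}\to\pm\infty$ in \eqref{45}. The bookkeeping, however, is genuinely different. The paper converts the spherical defects to planar ones via Legendre's theorem (Theorem \ref{c12}), asserts that ``in a neighbourhood of $p$ the scheme behaves like a planar one'' so that Proposition \ref{t1} (an explicit computation of the powers of the $3\times 3$ defect-transfer matrix for the planar scheme) gives $\tilde{\delta}_0^{j}/e_0^{j}\to\pm\infty$, and separately shows $A_0^{j}/e_0^{j}\to 0$ using \eqref{38}. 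You instead stay on the sphere: rescaling $K_i^{j}=2^{j}\delta_i^{j}$ and using the mirror symmetry to close \eqref{36} into the triangular system $g_{j+1}=g_j+\varepsilon_0^{\,j}$, $K_0^{j+1}=K_0^{j}+\tfrac12 g_j$, with the Girard areas entering as a summable forcing with $\sum_j\varepsilon_0^{\,j}=O(\eta^2)$. This buys two things: the explicit diagonalization of Proposition \ref{t1} is replaced by a two-line telescoping argument, and --- more substantively --- it tightens the step the paper leaves soft, since the spherical angle rule \eqref{41} is driven by the spherical defects $\delta_i^{j}$ rather than the planar ones $\tilde{\delta}_i^{j}$, so the planar proposition does not literally apply to the orbit; perturbing the exact spherical recursion \eqref{36} avoids that issue. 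The three items you flag as outstanding (propagation of the symmetry, the uniform bound $A_1^{j+1}=O(\eta^2 2^{-2j})$, and $e_{-1}^{j}+e_0^{j}\sim s\,2^{-j}$ with $s>0$, which follows from the convergence of $\prod_k \cos(\alpha_1^{k})^{-1}$ exactly as in the appendix) are the same points the paper disposes of quickly, and none is an obstruction; note only that to close the system you must place the points on the two arms symmetrically about $p_0^0$, not merely impose $\delta_i^0=0$ for $i\neq 0$.
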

	\begin{proof}
		We need to prove that there exists some initial geodesic polygon $P^0$ such that the limit curve \\ $C:=\displaystyle \lim_{j \to +\infty} \, P^j$ of the 4-point SBIGS-scheme has unbounded discrete geodesic curvature.\\	
		Let $P^0=\{p_{-2},p_{-1},p,p_1,p_2\}$ be a geodesic polygon with $\delta_{-1}^0=\delta_1^0$ and $\delta_{0}^0 \neq \delta_{1}^0$ (see Fig. \ref{fig:p4}). Let $\kappa_0^j$ be the discrete geodesic curvature at $p$ of $P^j$. We will prove that: If $\delta_0^0>\delta_1^0$ (resp.$\delta_0^0<\delta_1^0$ ) then $\lim_{j \to \infty}\, \kappa_0^{j}=+\infty$ (resp. $-\infty$).\\
		According to Theorem \ref{a5}, we have $\displaystyle \lim_{j \to \infty} \delta^j=0$. By Corollary 4.1 and Proposition 4.4 of \cite{Be}, we have $\displaystyle \lim_{j \to \infty} e^j=\lim_{j \to \infty}\, A^j=0$ with $e^j:=\underset{i\in \mathbb{Z}}{sup}\, |e_{i}^{j}|$ and $A^j:=\underset{i\in \mathbb{Z}}{sup}\, |A_{i}^{j}|$.\\
		Theorem \ref{c12} tells us that, when spherical polygons $P^j$ have sufficiently small edge lengths, then we can consider them to be planar ones with plane angular defects (denoted by a tilde) are given by \eqref{26}:
		\begin{equation}
			\tilde{\delta}_0^j=\delta_0^j+\displaystyle \frac{A_0^j}{3}+o((e^j)^4),
			\label{23}
		\end{equation}
		for large $j$. Because $\delta_{-1}^0=\delta_{1}^0$, we can easily show that for all $j$ we have $e_{-1}^j=e_0^j$. Then
		$$ \kappa_0^{j}= \displaystyle \Big( \frac{\tilde{\delta}_0^{j}}{e_0^{j}}-\frac{A_0^j}{3\, e_0^j}\Big)+o((e^j)^3).$$
		Since in a neighbourhood of $p$, the 4-point SBIGS-scheme behaves like a planar one, by Proposition \ref{t1}, we have $\displaystyle \lim_{j \to \infty}\, \frac{\tilde{\delta}_0^{j}}{e_0^{j}}=\pm \infty$.\\
		On the other hand, the spherical area $A_0^{j}$ of the triangle $T_0^{j}$ is given by
		\begin{equation}
			\displaystyle sin \Big(\displaystyle\frac{A_0^{j}}{2}\Big)=\frac{\displaystyle sin \Big(\frac{e_0^j}{2} \Big)^2 \, sin(\delta_0^j)}{4\, \displaystyle cos \Big(\frac{d(p_{-1}^{j},p_1^j)}{2} \Big)}.
			\label{38}
		\end{equation}
		Then
		$$\displaystyle \lim_{j \to \infty}\, \frac{\displaystyle  sin \Big(\frac{A_0^{j}}{2}\Big)}{ \displaystyle sin \Big(\frac{e_0^j}{2}\Big)}=\lim_{j \to \infty} \, \frac{\displaystyle sin \Big(\displaystyle \frac{e_0^j}{2}\Big) \, sin(\delta_0^j)}{4\, \displaystyle cos \Big(\frac{d(p_{-1}^{j},p_1^j)}{2} \Big)}=0.$$
		Now, we can suppose that $\displaystyle |A_0^{j}|, \, \, |e_0^{j}|<\frac{\pi}{2}$.
		Then $\displaystyle \lim_{j \to \infty}\, \frac{\displaystyle  A_0^{j}}{ \displaystyle e_0^j}=0$.
		Hence, we deduce the claim:
		$$\lim_{j \to \infty}\, \kappa_0^{j}=\lim_{j \to \infty}\, \displaystyle \Big( \frac{\tilde{\delta}_0^{j}}{e_0^{j}}-\frac{A_0^j}{3\, e_0^j}\Big)=\pm \infty.$$
	\end{proof}

	To prove Theorem \ref{a6} numerically, we plot the discrete geodesic curvature values (red) of $P^j$ in both cases: Fig. \ref{fig:sph2} with $\delta_0^0>\delta_1^0$ and Fig. \ref{fig:sph} with $\delta_0^0<\delta_1^0$. We show that the discrete geodesic curvature takes big values at the middle vertex of $P^0$ (namely $p$). We further plot the discrete curvatures of the circles defined by three consecutive points (black).
	\begin{figure}[H]
		\begin{subfigure}[c]{0.5\linewidth}
			\hspace*{-2cm}
			\includegraphics[width=1.2\linewidth]{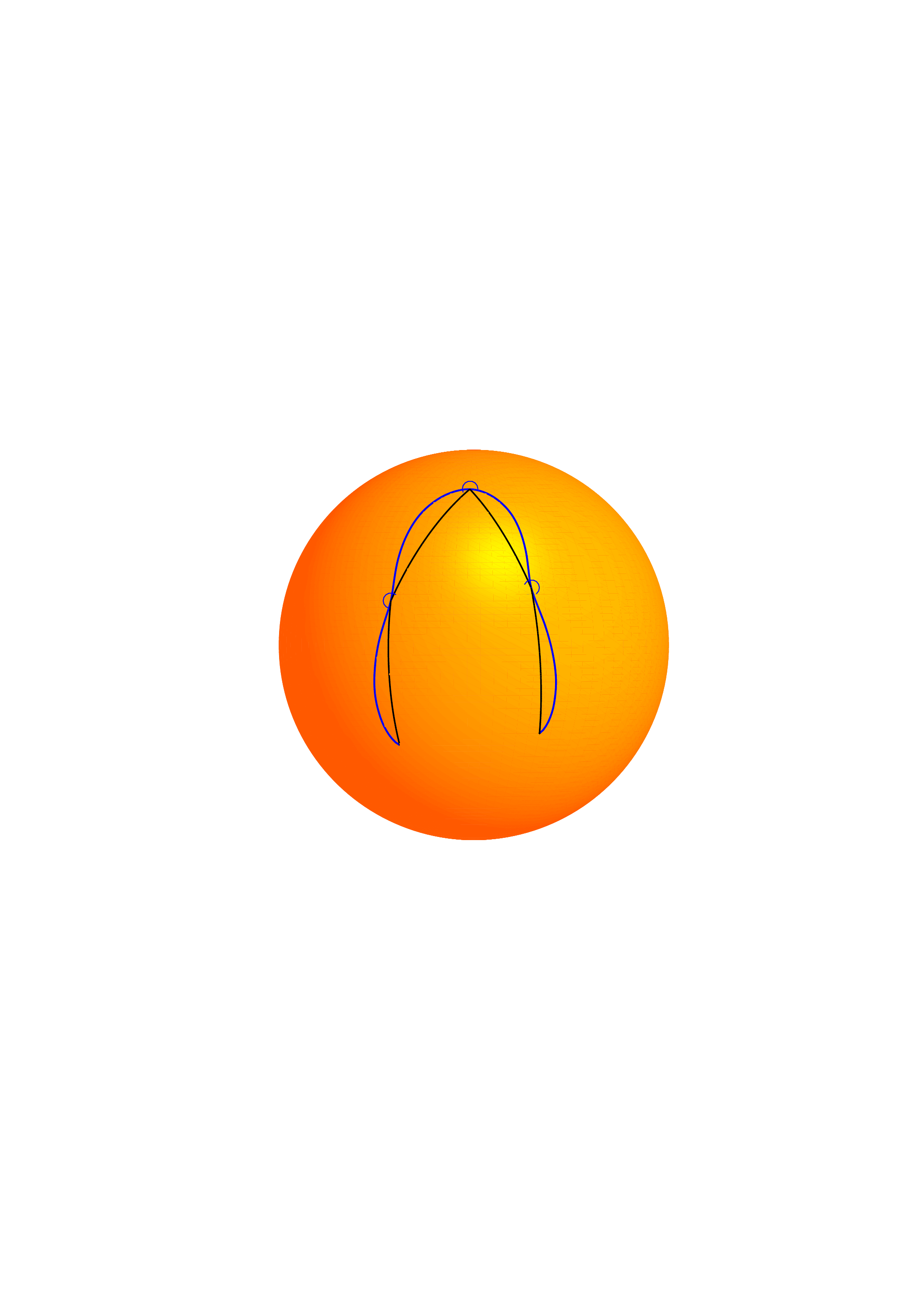}
			\vspace*{-5cm}
			\caption{}
			\label{fig:sph4point2}
		\end{subfigure}
		\hfill
		\begin{subfigure}[c]{0.5\linewidth}
			\hspace*{-3cm}
			\includegraphics[width=1.2\linewidth]{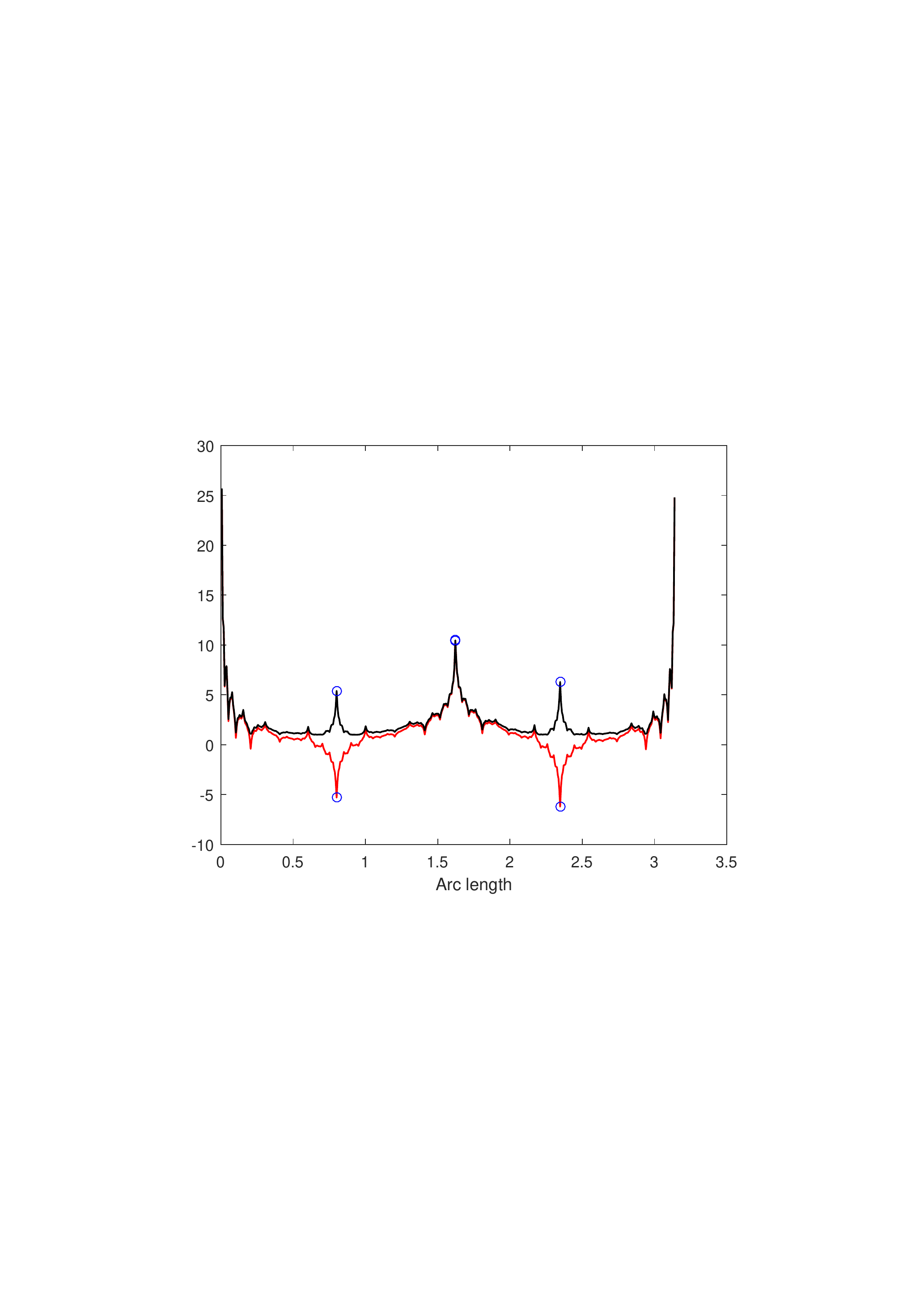}
			\vspace*{-4cm}
			\caption{}
			\label{fig:sph4cur2}
		\end{subfigure}
		\caption{ (a): Limit curve of the 4-point SBIGS-scheme of the polygon $P^0$ with $\delta_0^0>\delta_1^0$. (b). Plots of its: Discrete geodesic curvature (red) and discrete curvature (black) vs arc length  .}
		\label{fig:sph2}
	\end{figure}
	%%%%%%%%%%%%%%%%%%%%%%%%%%%%%%%%%%%%%%%%%%%%
	\begin{figure}[H]
		\vspace*{-5cm}
		\begin{subfigure}[c]{0.5\linewidth}
			\hspace*{-2cm}
			\includegraphics[width=1.2\linewidth]{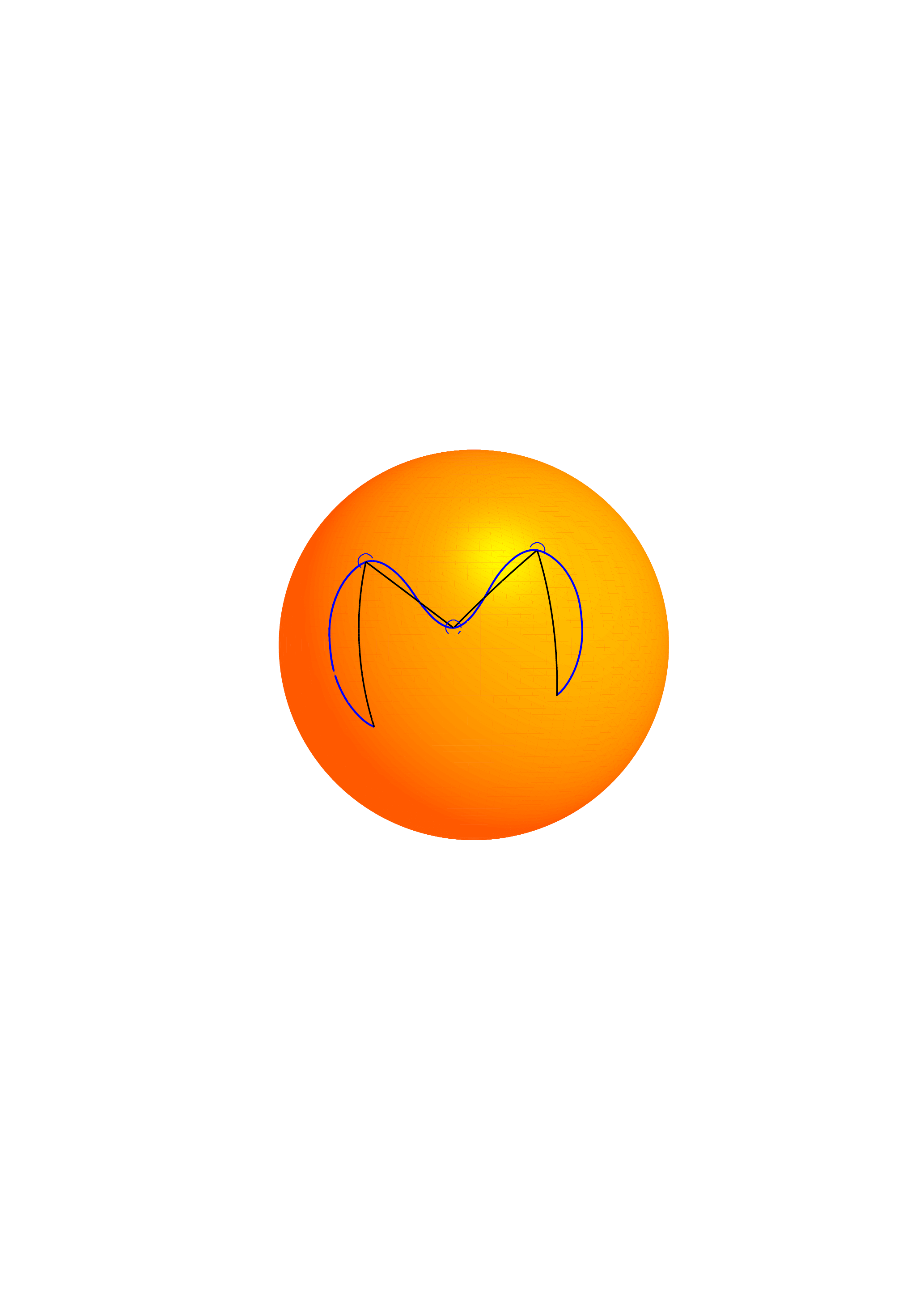}
			\vspace*{-5cm}
			\caption{}
			\label{fig:sph4point}
		\end{subfigure}
		\hfill
		\begin{subfigure}[c]{0.5\linewidth}
			\hspace*{-3cm}
			\includegraphics[width=1.2\linewidth]{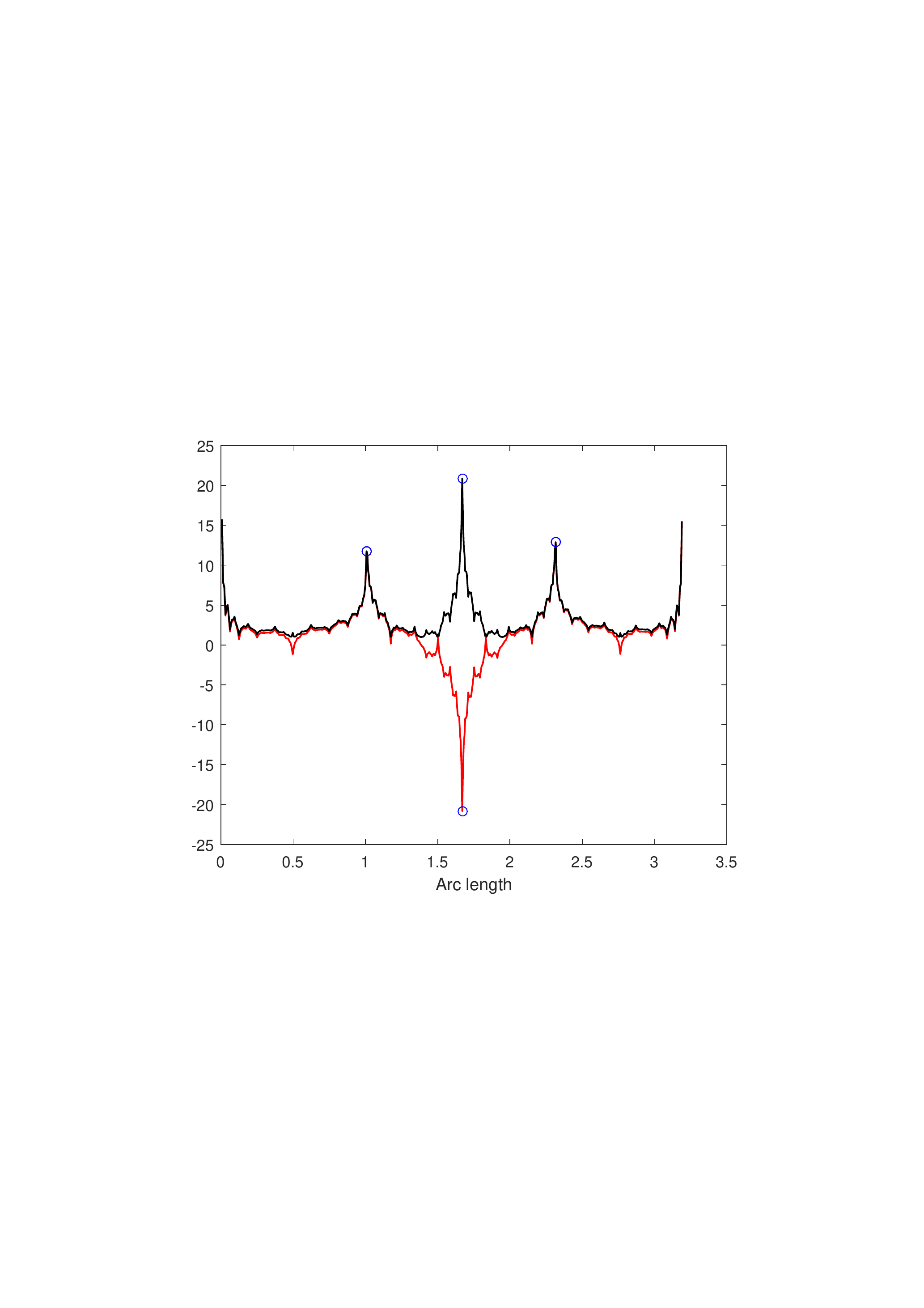}
			\vspace*{-5cm}
			\caption{}
			\label{fig:sph4cur}
		\end{subfigure}
		\caption{
			(a): Limit curve of the 4-point SBIGS-scheme of the polygon $P^0$ with $\delta_0^0<\delta_1^0$.
			(b): Plots of its: Discrete geodesic curvature (red) and discrete curvature (black) vs arc length  .}
		\label{fig:sph}
	\end{figure}
	\begin{figure}[h]
		\hspace*{-1cm}
		\begin{subfigure}[c]{0.5\linewidth}
			\includegraphics[width=1.3\linewidth]{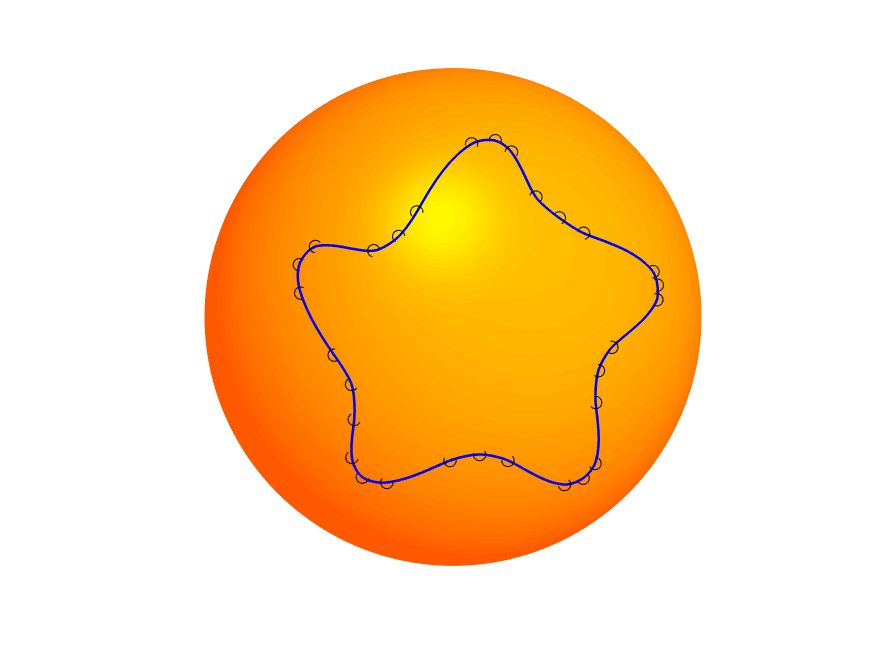}
			\caption{}
			\label{fig:Star}
		\end{subfigure}
		\hfill
		\hspace*{-3cm}
		\begin{subfigure}[c]{0.5\linewidth}
			\includegraphics[width=1.3\linewidth]{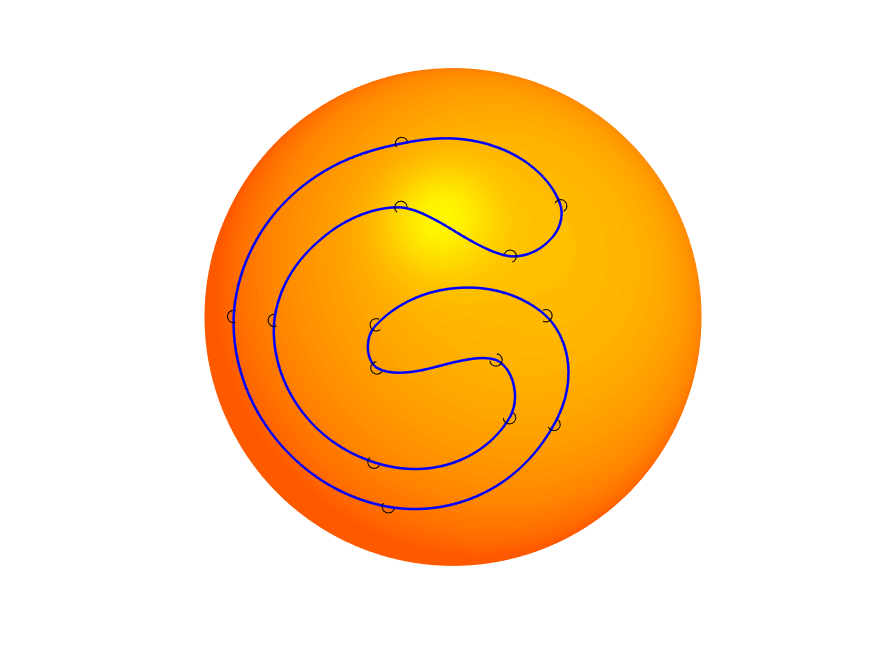}
			\caption{}
			\label{fig:Glike}
		\end{subfigure}
		\caption{Limit curves of the 4-point SBIGS-scheme of a Star-like points (a), G-like points (b).}
		\label{fig:4pt}
	\end{figure}
	Fig. \ref{fig:Star} and Fig. \ref{fig:Glike} show limit curves of the 4-point SBIGS-scheme applied to two different geodesic polygons (a star-like and G-like shapes). The plots show clearly that the curves have no cusps and eventually take the geometry of the control polygons into account.
	\subsection{The curvature-based 6-point spherical subdivision scheme}
	Sabin and Dodgson \cite{Sa} and Cashman et al. \cite{Cas} introduced a scheme able to generates $G^2$-continuous curves on the plane using the curvature of the interpolating circle. The new point $p_{2i+1}^{j+1}$ is determined to be the unique point on the perpendicular bisector of the edge $p_i^j p_{i+1}^j$ such that:
	$$\kappa_{2i+1}^{j+1}=\displaystyle \frac{1}{2}(\kappa_i^j+\kappa_{i+1}^j),$$
	where $\kappa_i^j=\displaystyle \frac{2\, sin(\delta_i^j)}{\parallel p_{i+1}^j-p_{i-1}^j \parallel}$. The new point is then determined by the angle $\delta_{2i+1}^{j+1}$ such that \\
	$sin(\delta_{2i+1}^{j+1})=\displaystyle \frac{\parallel p_{i+1}^j-p_{i-1}^j \parallel}{2}\, \kappa_{2i+1}^{j+1}$.\\
	If we take inspiration from the previous idea to define a spherical generalization using discrete geodesic curvature \eqref{45}, this will fail since from
	$\delta_{2i+1}^{j+1}=\kappa_{2i+1}^{j+1}\, (e_{2i}^{j+1}+e_{2i+1}^{j+1})/2$
	the curvature is not enough to define $p_{2i+1}^{j+1}$. \\
	To solve this problem, we will show in Lemma \ref{c3} that for a convergent and $G^1$-continuous SBIGS-scheme, we can use $\displaystyle \frac{2\, sin(\delta_i^j)}{d(p_{i-1}^j,p_{i+1}^j)}$ as a discrete geodesic curvature at $p_i^j$ instead of \eqref{45}.\\
	If we follow the notations of \eqref{11}-\eqref{15a}, we have:	
	\begin{lem}\label{c3}
		For a convergent SBIGS-scheme, if the sequence $\{ \delta^j\}_j$ is summable, then
		\begin{equation}
			\displaystyle \lim_{\substack{j \to \infty } }\, \frac{2\, sin(\delta_i^j)}{d(p_{i-1}^j,p_{i+1}^j)}=\displaystyle \lim_{\substack{j \to \infty } }\, \kappa_i^j.
			\label{43}
		\end{equation}
	\end{lem}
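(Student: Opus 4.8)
The plan is to rewrite the quotient on the left of \eqref{43} directly in terms of $\kappa_i^j=\dfrac{2\delta_i^j}{e_{i-1}^j+e_i^j}$ and then show that the two differ only by factors tending to $1$. Writing $d_i^j:=d(p_{i-1}^j,p_{i+1}^j)$ for the third side of the triangle $T_i^j=\Delta p_{i-1}^j p_i^j p_{i+1}^j$, one has the exact identity
\[
\frac{2\sin(\delta_i^j)}{d_i^j}=\kappa_i^j\,\frac{\sin(\delta_i^j)}{\delta_i^j}\,\frac{e_{i-1}^j+e_i^j}{d_i^j}
\]
(valid when $\delta_i^j\neq 0$; when $\delta_i^j=0$ both members vanish). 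It therefore suffices to prove that the two factors $\dfrac{\sin(\delta_i^j)}{\delta_i^j}$ and $\dfrac{e_{i-1}^j+e_i^j}{d_i^j}$ tend to $1$ as $j\to\infty$: the two sequences in \eqref{43} are then asymptotically equivalent, hence share the same limit in $\R\cup\{\pm\infty\}$. This last remark is what keeps the statement meaningful even when $\kappa_i^j$ is unbounded, as happens in Theorem \ref{a6}.

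First I would record the decay. Summability of $\{\delta^j\}_j$ forces $\delta^j\to 0$, so $|\delta_i^j|\le\delta^j\to 0$; and for a convergent SBIGS-scheme Corollary 4.1 and Proposition 4.4 of \cite{Be} give $e^j\to 0$ and $A^j\to 0$, where $e^j=\sup_i|e_i^j|$ and $A^j=\sup_i|A_i^j|$. From $\delta_i^j\to 0$ the first factor satisfies $\dfrac{\sin(\delta_i^j)}{\delta_i^j}\to 1$ at once.

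The heart of the matter is the second factor, which expresses that $T_i^j$ flattens: as the angular defect at $p_i^j$ tends to $0$, the interior angle there tends to $\pi$, and the two adjacent edges must add up to the opposite side. To make this quantitative I would pass, exactly as in the proof of Theorem \ref{a6}, to the comparison Euclidean triangle $\tilde T_i^j$ with the same three side lengths $e_{i-1}^j$, $e_i^j$, $d_i^j$ as $T_i^j$; by Theorem \ref{c12} its planar angular defect satisfies $\tilde{\delta}_i^j=\delta_i^j+\dfrac{A_i^j}{3}+o((e^j)^4)$, whence $\tilde{\delta}_i^j\to 0$. In $\tilde T_i^j$ the interior angle at the middle vertex is $\pi-\tilde{\delta}_i^j$, and the ordinary law of cosines is \emph{exact}:
\[
(d_i^j)^2=(e_{i-1}^j)^2+(e_i^j)^2+2\,e_{i-1}^j e_i^j\cos(\tilde{\delta}_i^j)
=\big(e_{i-1}^j+e_i^j\big)^2-4\,e_{i-1}^j e_i^j\sin^2\!\Big(\tfrac{\tilde{\delta}_i^j}{2}\Big),
\]
so that
\[
\frac{(d_i^j)^2}{(e_{i-1}^j+e_i^j)^2}=1-\frac{4\,e_{i-1}^j e_i^j}{(e_{i-1}^j+e_i^j)^2}\,\sin^2\!\Big(\tfrac{\tilde{\delta}_i^j}{2}\Big).
\]

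The step I expect to be the main obstacle is controlling the coefficient on the right, since a priori one edge could be far shorter than the other. The key observation is that $\dfrac{4\,e_{i-1}^j e_i^j}{(e_{i-1}^j+e_i^j)^2}\le 1$ for every $j$ by the AM--GM inequality, uniformly in how the two edge lengths compare. Hence the right-hand side lies between $1-\sin^2(\tilde{\delta}_i^j/2)$ and $1$ and tends to $1$ as $\tilde{\delta}_i^j\to 0$, giving $\dfrac{e_{i-1}^j+e_i^j}{d_i^j}\to 1$. Without this uniform bound one could only control the difference $e_{i-1}^j+e_i^j-d_i^j$, not the ratio. Substituting the two factor limits into the displayed identity yields \eqref{43}.
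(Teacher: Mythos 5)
Your argument is correct, but it takes a genuinely different route from the paper's. The paper obtains the statement from a single exact identity of spherical trigonometry, the Darboux formula
$\cos((\alpha_i^j-\beta_i^j)/2)/\sin(\gamma_i^j/2)=\sin((e_{i-1}^j+e_i^j)/2)/\sin(d(p_{i-1}^j,p_{i+1}^j)/2)$ with $\gamma_i^j=\pi-\delta_i^j$, which after rearrangement writes $\sin(\delta_i^j)/\sin(d(p_{i-1}^j,p_{i+1}^j)/2)$ as $\kappa_i^j$ times the product
$\frac{\sin(\delta_i^j/2)}{\delta_i^j}\cdot\frac{e_{i-1}^j+e_i^j}{\sin((e_{i-1}^j+e_i^j)/2)}\cdot\cos((\alpha_i^j-\beta_i^j)/2)$, which tends to $1$ once Proposition 4.4 of \cite{Be} gives $e_i^j,\alpha_i^j,\beta_i^j\to0$. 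You instead isolate the single ratio $(e_{i-1}^j+e_i^j)/d(p_{i-1}^j,p_{i+1}^j)$ and prove it tends to $1$ by passing to the Euclidean comparison triangle via Theorem \ref{c12} and applying the planar law of cosines together with AM--GM; this is sound (the comparison triangle exists by the spherical triangle inequality, $\tilde\delta_i^j\to0$ follows from $\delta_i^j\to0$ and $A^j\to0$, and your uniform bound $4e_{i-1}^je_i^j/(e_{i-1}^j+e_i^j)^2\le1$ is exactly what is needed to handle very unequal edge lengths). The trade-off: your route imports Legendre's approximation theorem where an exact spherical identity suffices (the paper only needs Theorem \ref{c12} later, for Theorem \ref{a6}), but in exchange your starting identity is already expressed in terms of $d(p_{i-1}^j,p_{i+1}^j)$ rather than $\sin(d(p_{i-1}^j,p_{i+1}^j)/2)$, so you avoid the extra factor $\sin(d/2)/(d/2)\to1$ that the paper leaves implicit when matching its final identity to the statement \eqref{43}.
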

	\begin{proof}
		This is a simple use of the Darboux formula of spherical triangles:
		$$\displaystyle \frac{cos((\alpha_i^j-\beta_i^j)/2)}{sin(\gamma_i^j/2)}=\frac{sin((e_{i-1}^j+e_i^j)/2)}{sin(d(p_{i-1}^j,p_{i+1}^j)/2)},$$
		with $\gamma_i^j=\pi-\delta_i^j$ the internal angle at $p_i^j$.
		Then
		$$\displaystyle 2\, sin(\delta_i^j/2)\, \frac{cos(\delta_i^j/2)}{sin(d(p_{i-1}^j,p_{i+1}^j)/2)}=2\, sin(\delta_i^j/2)\, \frac{cos((\alpha_i^j-\beta_i^j)/2)}{sin((e_{i-1}^j+e_i^j)/2)} .$$
		Hence
		$$\displaystyle \frac{sin(\delta_i^j)}{sin(d(p_{i-1}^j,p_{i+1}^j)/2)}=2\, \frac{sin(\delta_i^j/2)}{sin((e_{i-1}^j+e_i^j)/2)}\, cos((\alpha_i^j-\beta_i^j)/2) .$$
		Since $\displaystyle \kappa_i^j:=\frac{2\, \delta_i^j}{e_{i-1}^j+e_i^j}$, we obtain
		$$		\displaystyle \frac{sin(\delta_i^j)}{sin(d(p_{i-1}^j,p_{i+1}^j)/2)}=\kappa_i^j\, \frac{\displaystyle sin(\delta_i^j/2)}{\delta_i^j}\,  \frac{e_{i-1}^j+e_i^j}{sin((e_{i-1}^j+e_i^j)/2)}\, cos\Big( \displaystyle \frac{\alpha_i^j-\beta_i^j}{2} \Big).$$
		By Proposition 4.4 of \cite{Be}, we have $\displaystyle \lim_{j \to \infty} e_i^j=\lim_{j \to \infty}\, \alpha_i^j=\lim_{j \to \infty}\, \beta_i^j=0$. Finally, we deduce the claim.

	\end{proof}
	Let $\kappa_i^j=\displaystyle \frac{2\, sin(\delta_i^j)}{d(p_{i-1}^j,p_{i+1}^j)}$.	We define the curvature-based 6-point spherical scheme as a SBIGS-scheme in which the new point is determined using its discrete geodesic curvature. We define the new discrete geodesic curvature \\
	$\kappa_{2i+1}^{j+1}= \frac{2\, sin(\delta_{2i+1}^{j+1})}{d(p_{i-1}^j,p_{i+1}^j)}$ by applying twice the odd sub-mask of second-ordre differences of the linear 6-point scheme \\
	$\frac{1}{64}\, \{-3,19,19,-3\}$
	to $\kappa_{i-1}^{j}, \, \kappa_{i}^{j},\, \kappa_{i+1}^{j}$ and $\kappa_{i+2}^{j}$, namely:
	$$\kappa_{2 i+1}^{j+1}=-\frac{3}{32} \kappa_{i-1}^{j}+\frac{19}{32} \kappa_{i}^{j}+\frac{19}{32} \kappa_{i+1}^{j}-\frac{3}{32} \kappa_{i+2}^{j}.$$
	In order to have $\displaystyle \frac{d( p_i^j,p_{i+1}^j)}{2}\, \kappa_{2i+1}^{j+1}=sin(\delta_{2i+1}^{j+1})$ in the interval $\left[-1,1\right]$, we apply some iterations of the spherical angle-based 4-point scheme of section 3.1 to the initial polygon.\\
	
	On the triangle formed by $p_i^j$, $p_{2i+1}^{j+1}$ and $c_{i,i+1}^j(l_{i,i+1}^j/2)$ (see Fig. \ref{fig:f11}), we use the right spherical triangle formula
	$$sin(\alpha_{2i+1}^{j+1})=\displaystyle \frac{cos\Big((\pi-\delta_{2i+1}^{j+1})/2\Big)}{cos(d(p_{i-1}^j,p_{i+1}^j)/2)},$$
	to compute $\alpha_{2i+1}^{j+1}$. Finally we use \eqref{15a} to obtain the new point $p_{2i+1}^{j+1}$.\\
	
	Fig. \ref{fig:incenter} shows several experiments of the proposed SBIGS-scheme. We observe that the scheme is convergent and $G^1$-continuous. From Corollary \ref{co1}, we have to prove that the sequence $\{\delta^j\}_j$ is summable. In Fig. \ref{fig:RatioDelt}, we display the ratio  $\delta^{j+1}/\delta^j$ depending on j for each initial geodesic polygon considered. The ratios are less than 1, then it means that the corresponding sequence $\{\delta^j\}_j$ behaves like a geometric sequence and is summable.\\
	%%%%%%%%%%%%%%%%%%%%%%%%%%%%%%%%%%%%%%%%%%%%%%%%%%%%%
	%%%%%%%%%%%%%%%%%%%%%%%%%%%%%%%%%%%%%%%%%%%%%%%%%%%%%%%%%%%%%%%%
	%%%%%%%%%%%%%%%%%%%%%%%%%%%%%%%%%%%%%%%%%%%%%%%%%%%%%%%%%%%%%%%%%%%%%%%%%

	%%%%%%%%%%%%%%%%%%%%%%%%%%%%%%%%%%%%%%%%%%%%%%%%%%%%%%%
	%%%%%%%%%%%%%%%%%%%%%%%%%%%%%%%%%%%%%%%%%%%%%%%%%%%%%%%%%%%%%%%%%%%%%%%
	
	\begin{figure}[H]
		\begin{subfigure}[c][0.6\width]{0.4\textwidth}
			\centering
			\hspace*{-0.2cm}
			\includegraphics[width=0.9\linewidth]{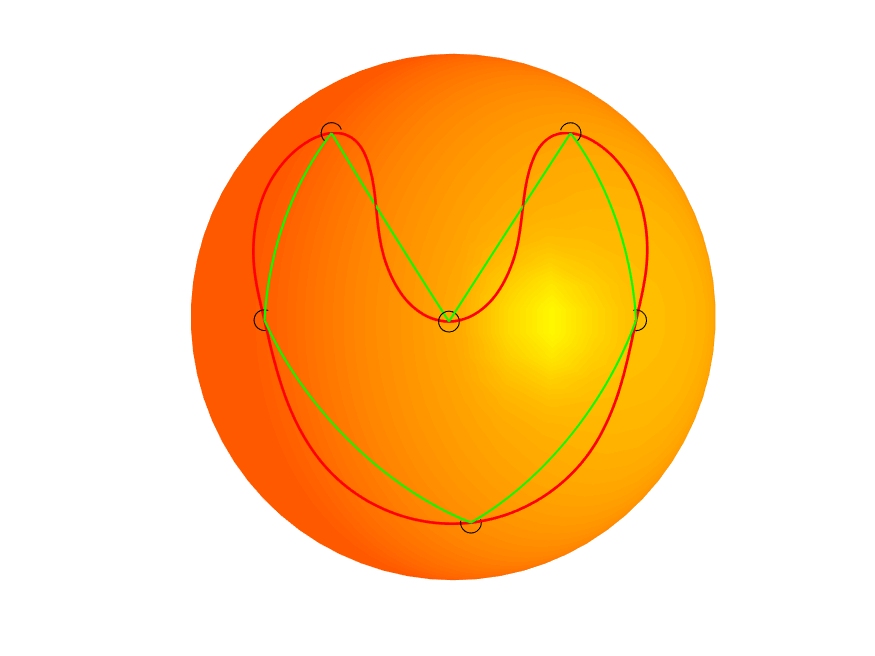}
			\vspace*{0cm}
			\caption{}
			\label{fig:Ex2}
		\end{subfigure}
		\hspace{-2.2cm}
		\hfill
		\begin{subfigure}[c][0.6\width]{0.4\textwidth}
			\centering
			\includegraphics[width=0.9\linewidth]{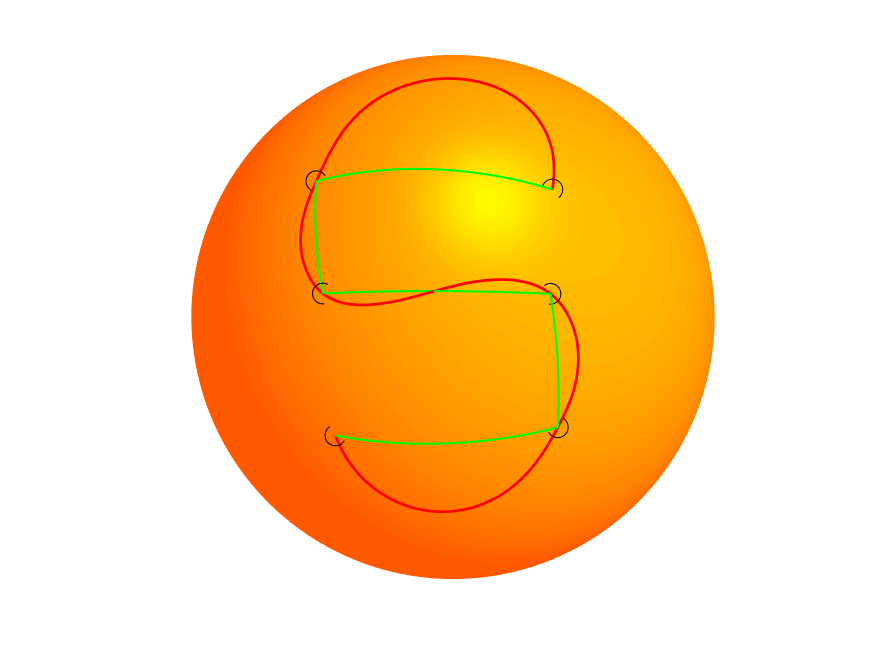}
			\vspace*{0cm}
			\caption{}
			\label{fig:Slike}
		\end{subfigure}
		\hspace{-2cm}
		\hfill
		\begin{subfigure}[c][0.6\width]{0.4\textwidth}
			\centering
			\includegraphics[width=0.9\linewidth]{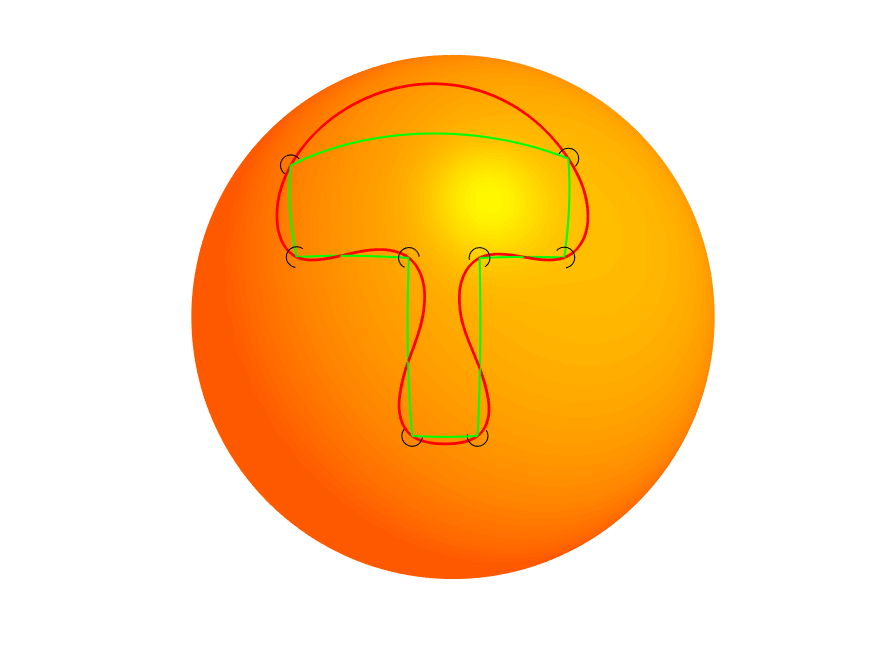}
			\vspace*{0cm}
			\caption{}
			\label{fig:Tlike}
		\end{subfigure}
		\vspace*{1cm}
		\caption{Limit curves of the curvature-based 6-point spherical scheme: M-like (a), S-like (b) and T-like (c) curves.}
\label{fig:incenter}
	\end{figure}
	
	%%%%%%%%%%%%%%%%%%%%%%%%%%%%%%%%%%%%%%%%%%%%%%%%%%%

		\begin{figure}[H]
		\hspace*{-3cm}
		\begin{subfigure}[c][0.5\width]{0.4\textwidth}
			\includegraphics[width=1.4\linewidth]{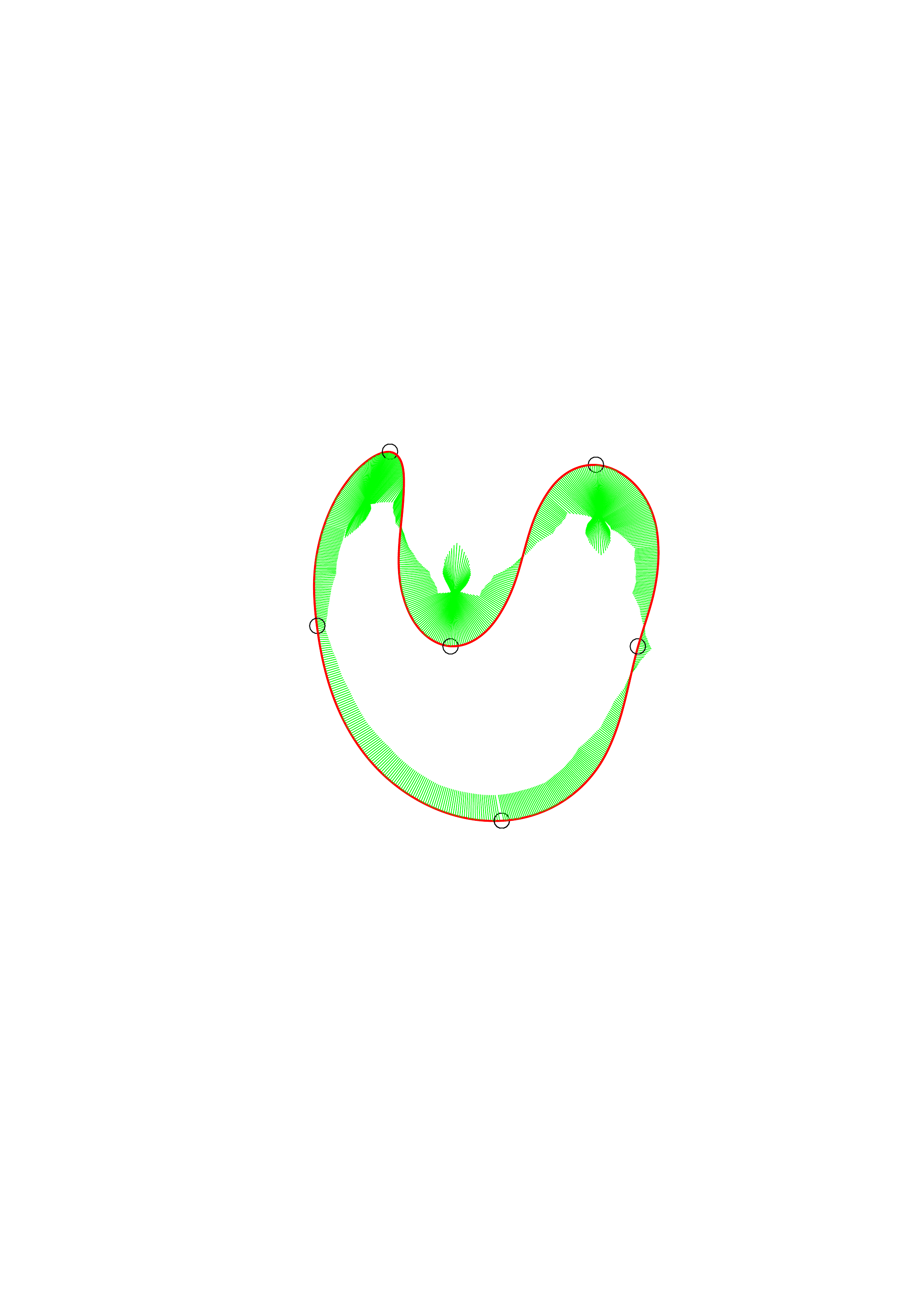}
			\vspace*{-5cm}
			\caption{}
			\label{fig:}
		\end{subfigure}
		\hfill
		\hspace*{-3cm}	
		\begin{subfigure}[c][0.5\width]{0.4\textwidth}
			\includegraphics[width=1.4\linewidth]{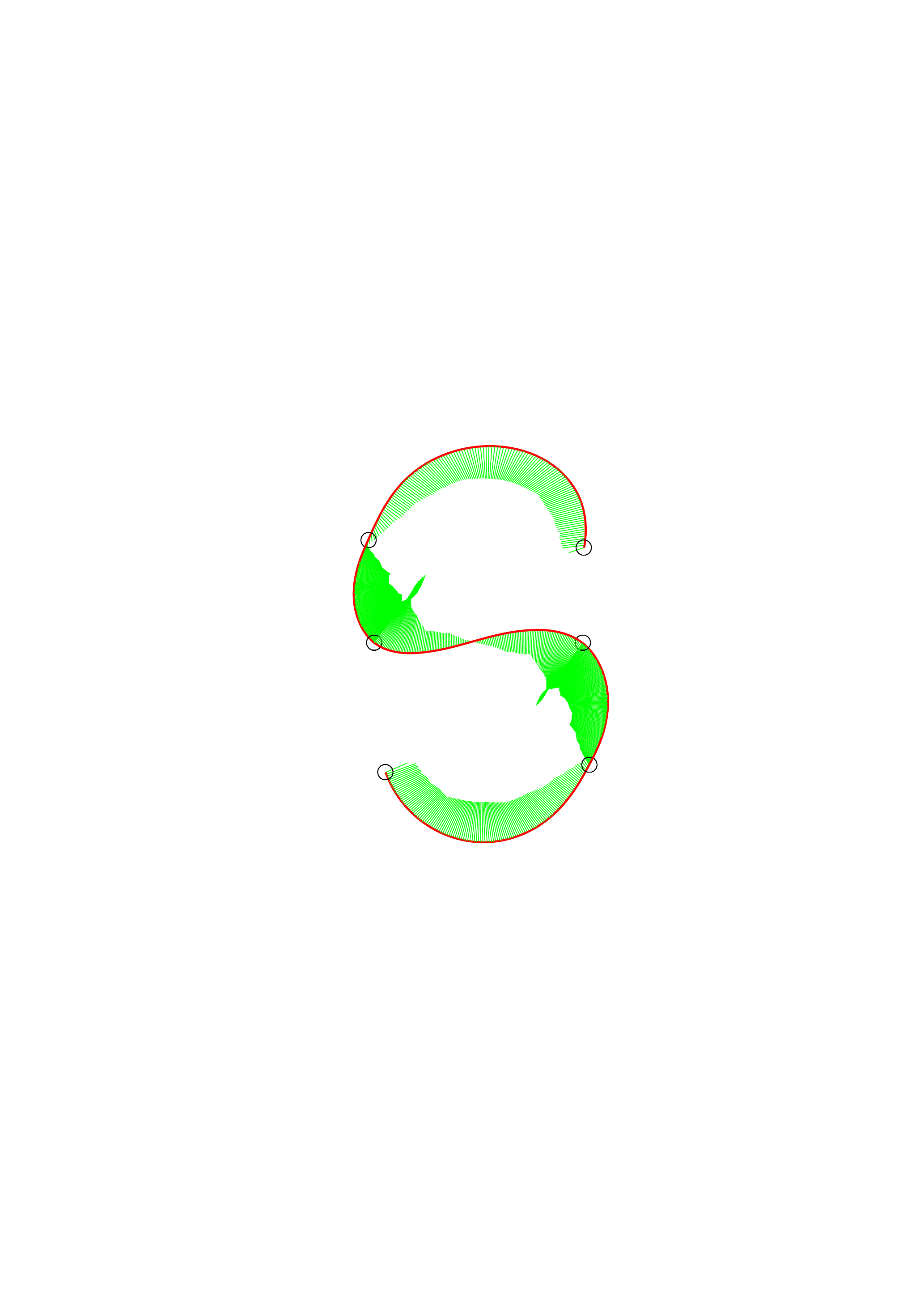}
			\vspace*{-5cm}
			\caption{}
			\label{fig:}
			
		\end{subfigure}
		\hfill
		\hspace*{-3cm}
		\begin{subfigure}[c][0.5\width]{0.4\textwidth}
			\includegraphics[width=1.4\linewidth]{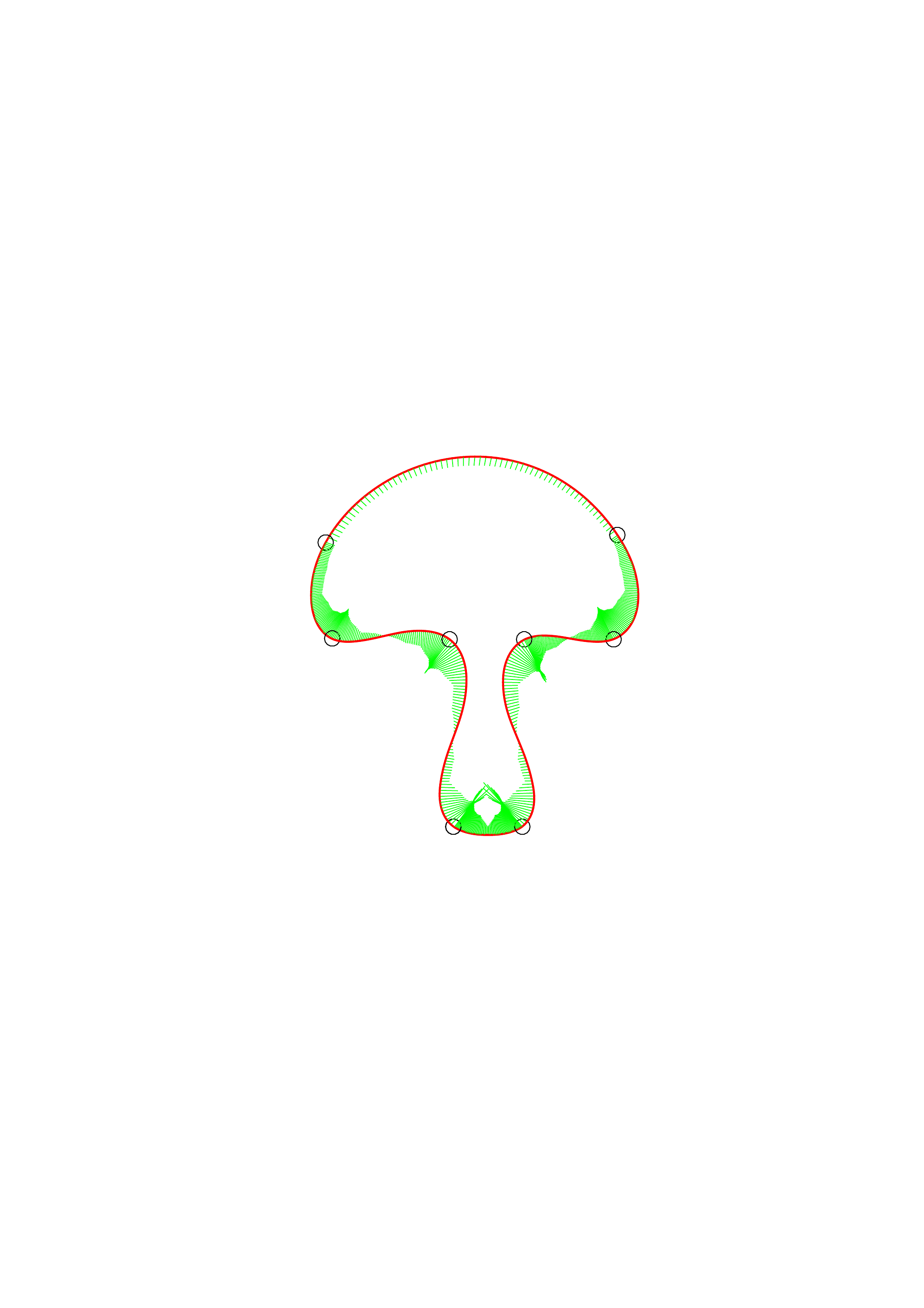}
			\vspace*{-5cm}
			\caption{}
			\label{fig:}
		\end{subfigure}
		\vspace*{4cm}
		\caption{Discrete normal lines of the curvature-based 6-point spherical scheme: M-like (a), S-like (b) and T-like (c) curves.}
\label{fig:Dcurva6pt}
	\end{figure}
	
	%%%%%%%%%%%%%%%%%%%%%%%%%%%%%%%%%%%%%%%%%%%%%%
	\vspace*{-1cm}
			\begin{figure}[H]
		\hspace*{-3cm}
		\begin{subfigure}[c][0.5\width]{0.4\textwidth}
			\includegraphics[width=1.2\linewidth]{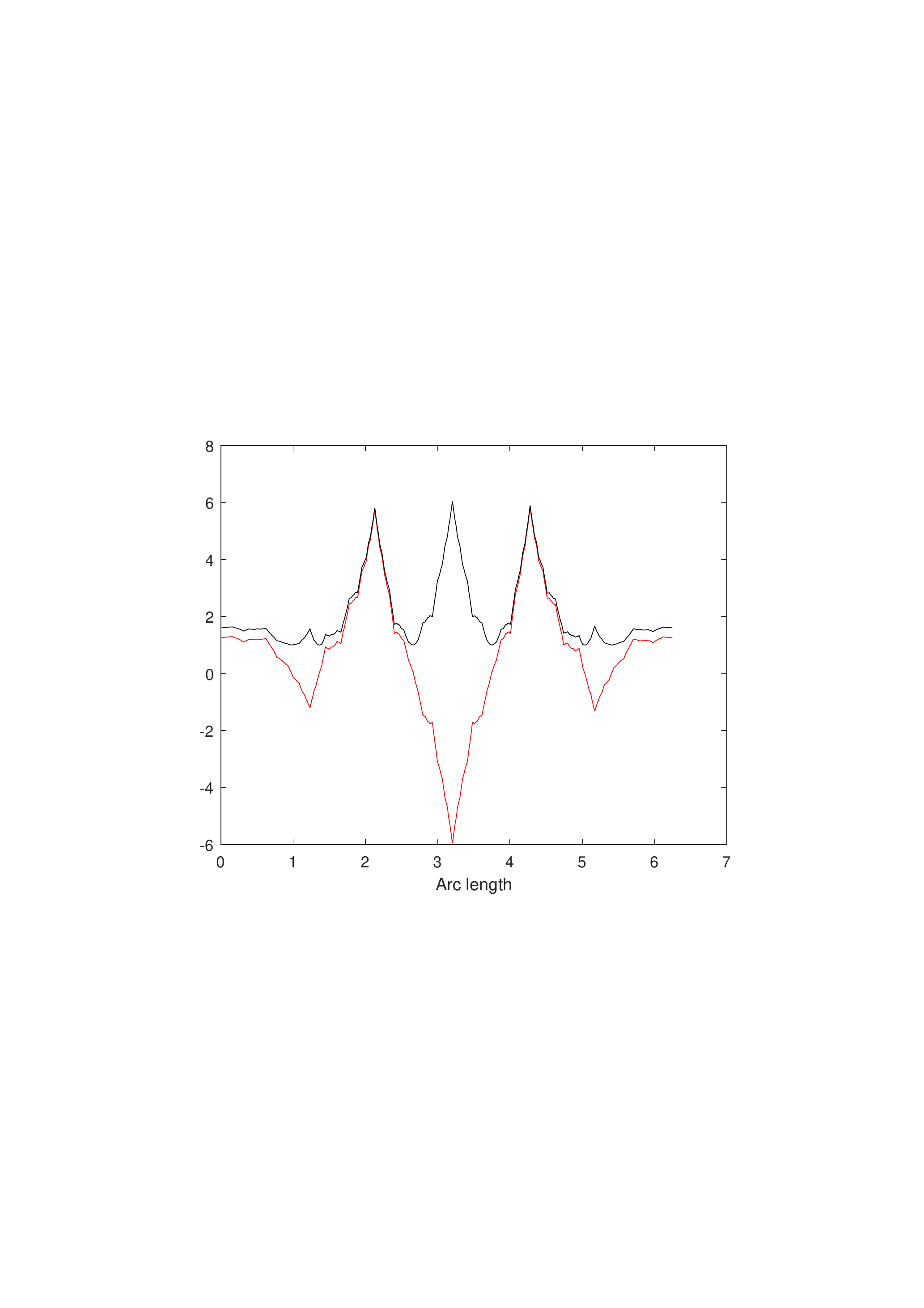}
			\vspace*{-4cm}
			\caption{}
\label{fig:Ex2cur}
		\end{subfigure}
		\hfill
		\hspace*{-3cm}	
		\begin{subfigure}[c][0.5\width]{0.4\textwidth}
			\includegraphics[width=1.2\linewidth]{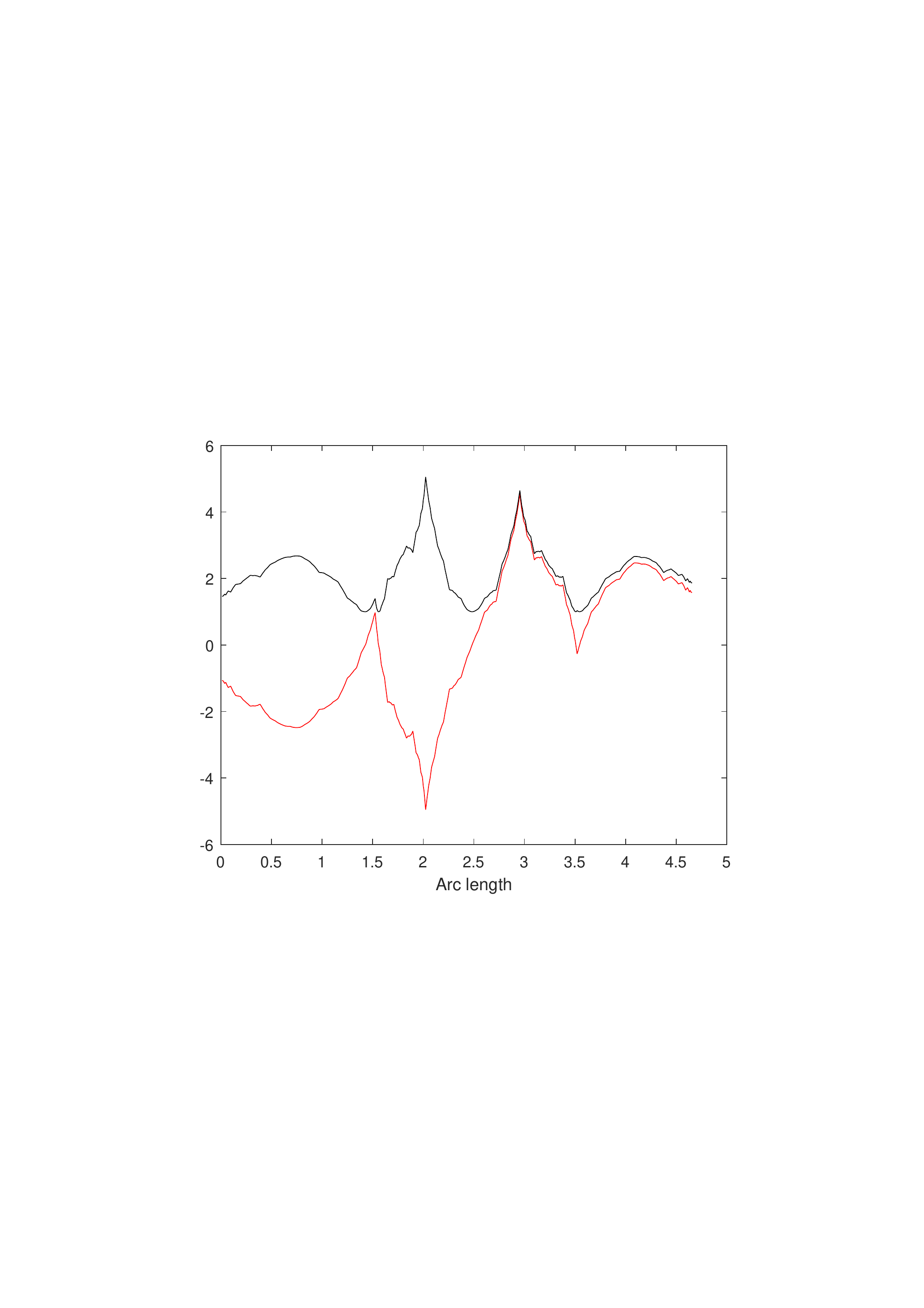}
			\vspace*{-4cm}
			\caption{}
			\label{fig:Slikecur}
			
		\end{subfigure}
		\hfill
		\hspace*{-3cm}
		\begin{subfigure}[c][0.5\width]{0.4\textwidth}
			\includegraphics[width=1.2\linewidth]{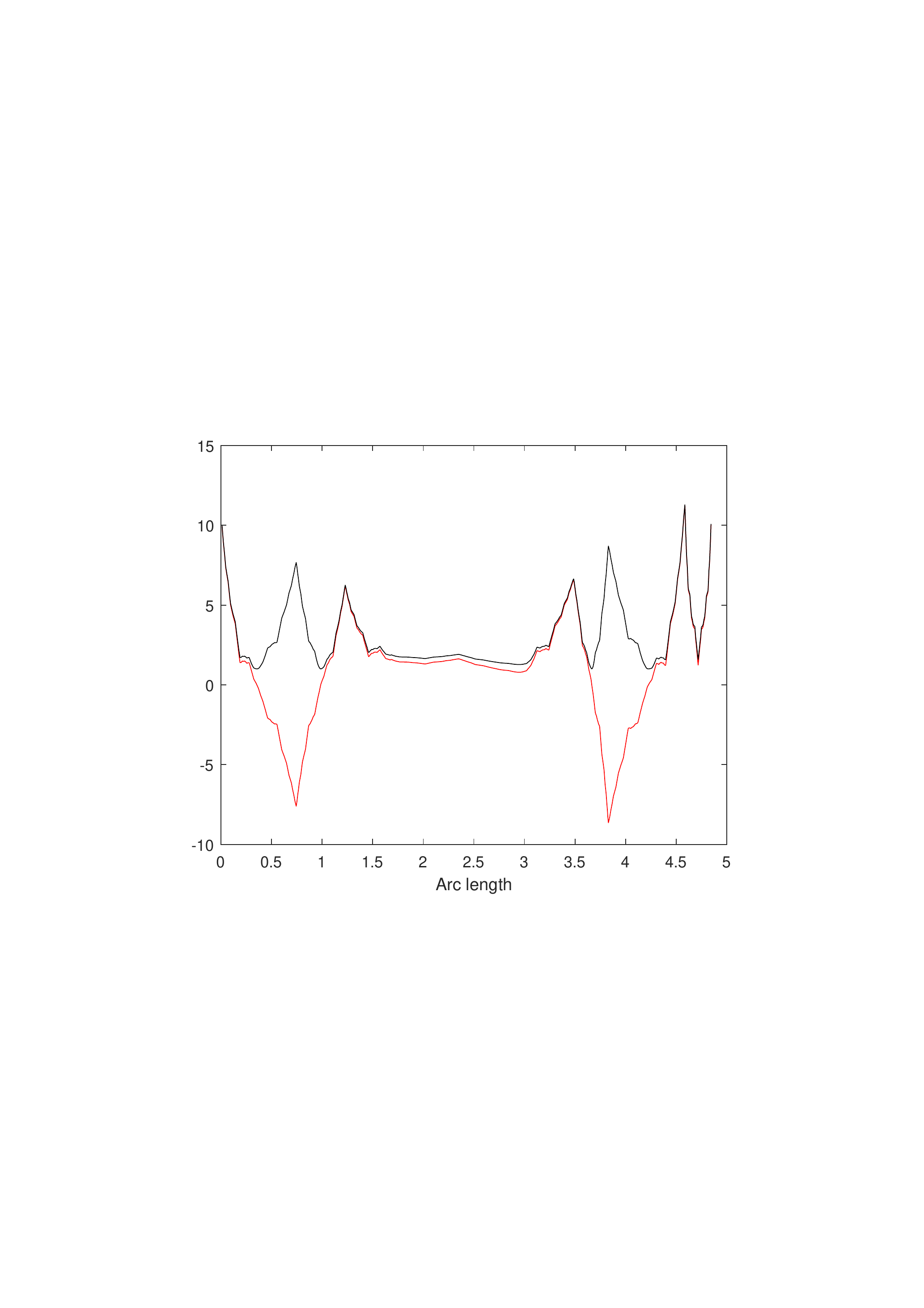}
			\vspace*{-4cm}
			\caption{}
			\label{fig:Tlikecur}
		\end{subfigure}
		\vspace*{3cm}
		\caption{Discrete geodesic curvature (red) and discrete curvature (black) plots vs arc length  of the M-like (a), S-like  (b) and T-like (c) curves.}
\label{fig:incentercur}
	\end{figure}
	
In order to test the $G^2$-continuity of examples in Fig. \ref{fig:incenter}, we display in Fig. \ref{fig:incentercur} the values of the discrete geodesic curvature (red) and the discrete curvature (black) of the circles $C_i^j$ defined by three consecutive points $p_{i-1}^j$, $p_{i}^j$ and $p_{i+1}^j$  for geodesic polygons at iteration 9. We see that both curvatures are continuous functions. We plot in Fig. \ref{fig:Dcurva6pt} the discrete normals of $C_i^j$ (scaled) of the considered examples. We see that the discrete normal vectors form a continuous field along the curves.\\
Motivated by the idea of considering the summability of the sequence $\{ \delta^j\}_j$ to prove the $G^1$-continuity of plane curves,  Volontè, E. \cite{Vo} suggests to study the sequence of maximum curvature differences: $\nabla \kappa^j:=\underset{i\in \mathbb{Z}}{sup}\, |\frac{1}{r_{i+1}^{j}}-\frac{1}{r_i^j}|$, where $r_i^j$ is the radius of the circle $C_i^j$,  in order to guarantee the $G^2$-continuity of plane curves. The proof is carefully explained but some lemmas are missing . However, the author shows by numerical evidences that the statements of the missing lemmas are true.  \\
Inspiring by the previous ideas, we propose to use the discrete geodesic curvature to argue the $G^2$-continuity of spherical curves. In Fig. \ref{fig:MaxDiffCur} and Fig. \ref{fig:RatioMaxDiff} we compute the sequence $\nabla \kappa^j:=\underset{i\in \mathbb{Z}}{sup}\, |\kappa_{i+1}^{j}-\kappa_i^j|$ of the maximum curvature differences and the ratio  $\nabla \kappa^{j+1}/\nabla \kappa^j$ of limit curves in examples Fig. \ref{fig:incenter}. We see that the difference between the discrete geodesic curvatures $\kappa_i^j$ and $\kappa_{i+1}^j$ decays to 0 and the ratios run to a constant strictly less than 1.
		\begin{figure}[H]
\hspace*{-3cm}
		\begin{subfigure}[c][0.5\width]{0.4\textwidth}
		\includegraphics[width=1.4\linewidth]{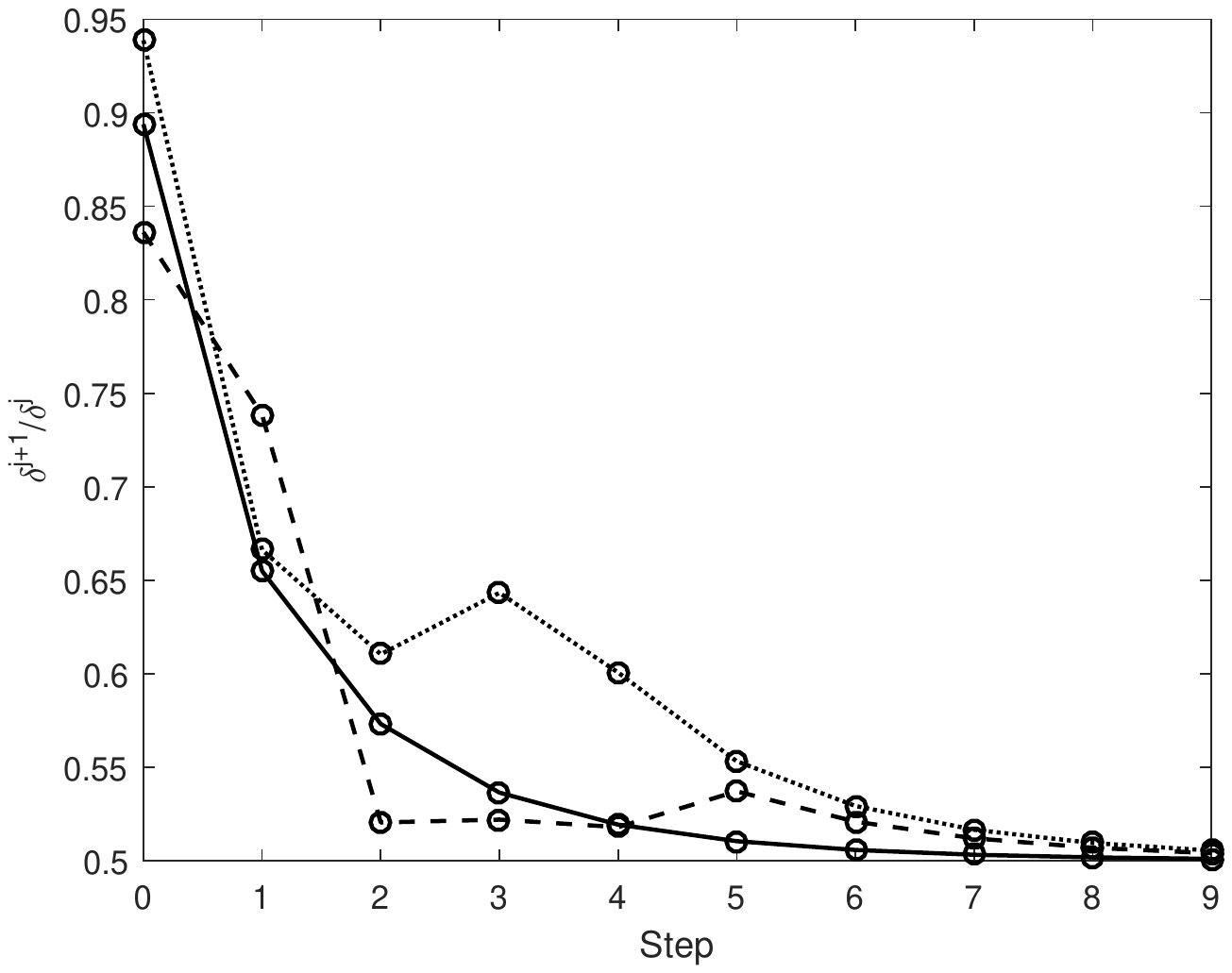}
		\vspace*{-4cm}
		\caption{}
        \label{fig:RatioDelt}
		\end{subfigure}
\hfill
\hspace*{-3cm}	
		\begin{subfigure}[c][0.5\width]{0.4\textwidth}
		\includegraphics[width=1.4\linewidth]{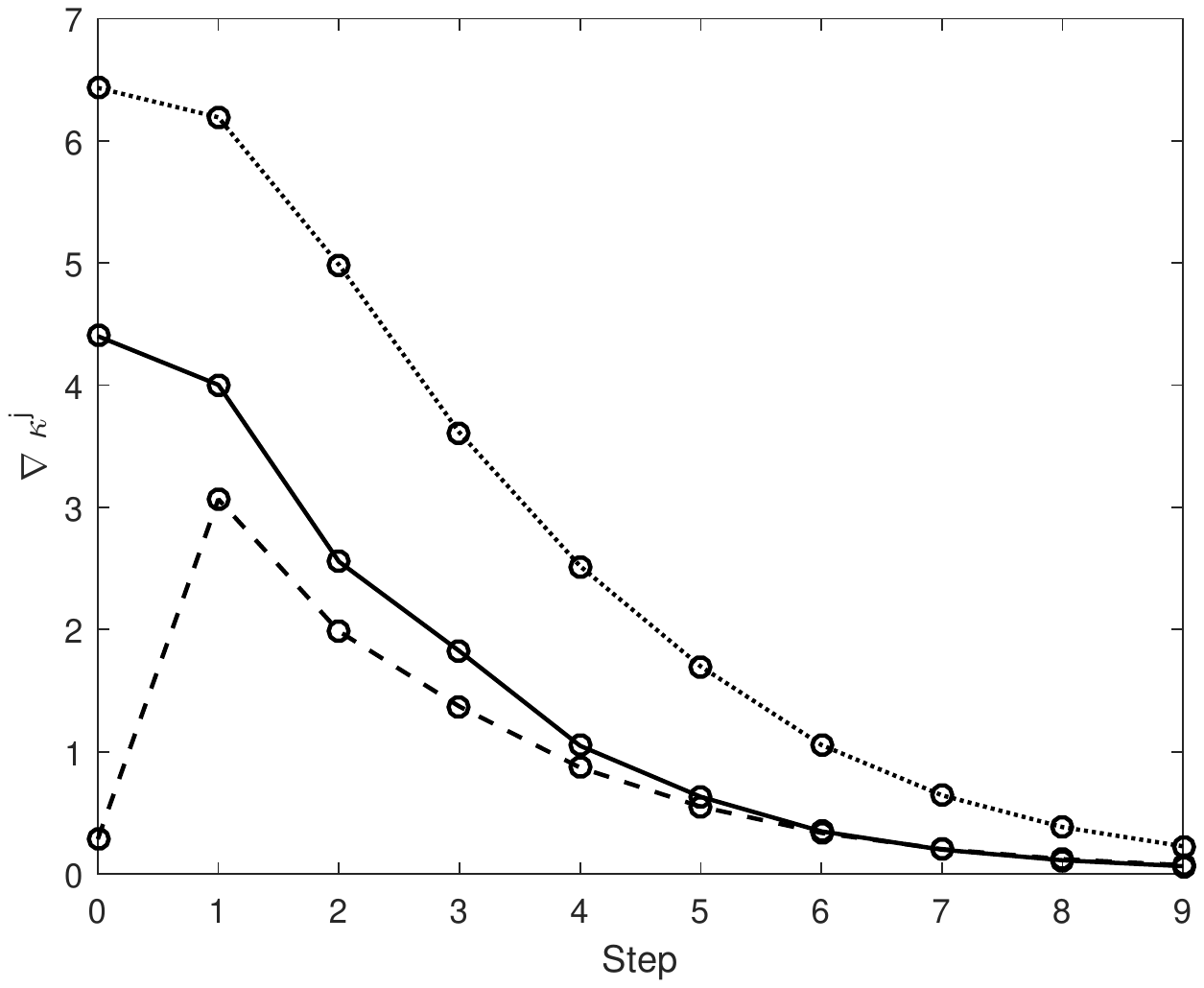}
		\vspace*{-4cm}
		\caption{}
		\label{fig:MaxDiffCur}

		\end{subfigure}
\hfill
\hspace*{-3cm}
	\begin{subfigure}[c][0.5\width]{0.4\textwidth}
		\includegraphics[width=1.4\linewidth]{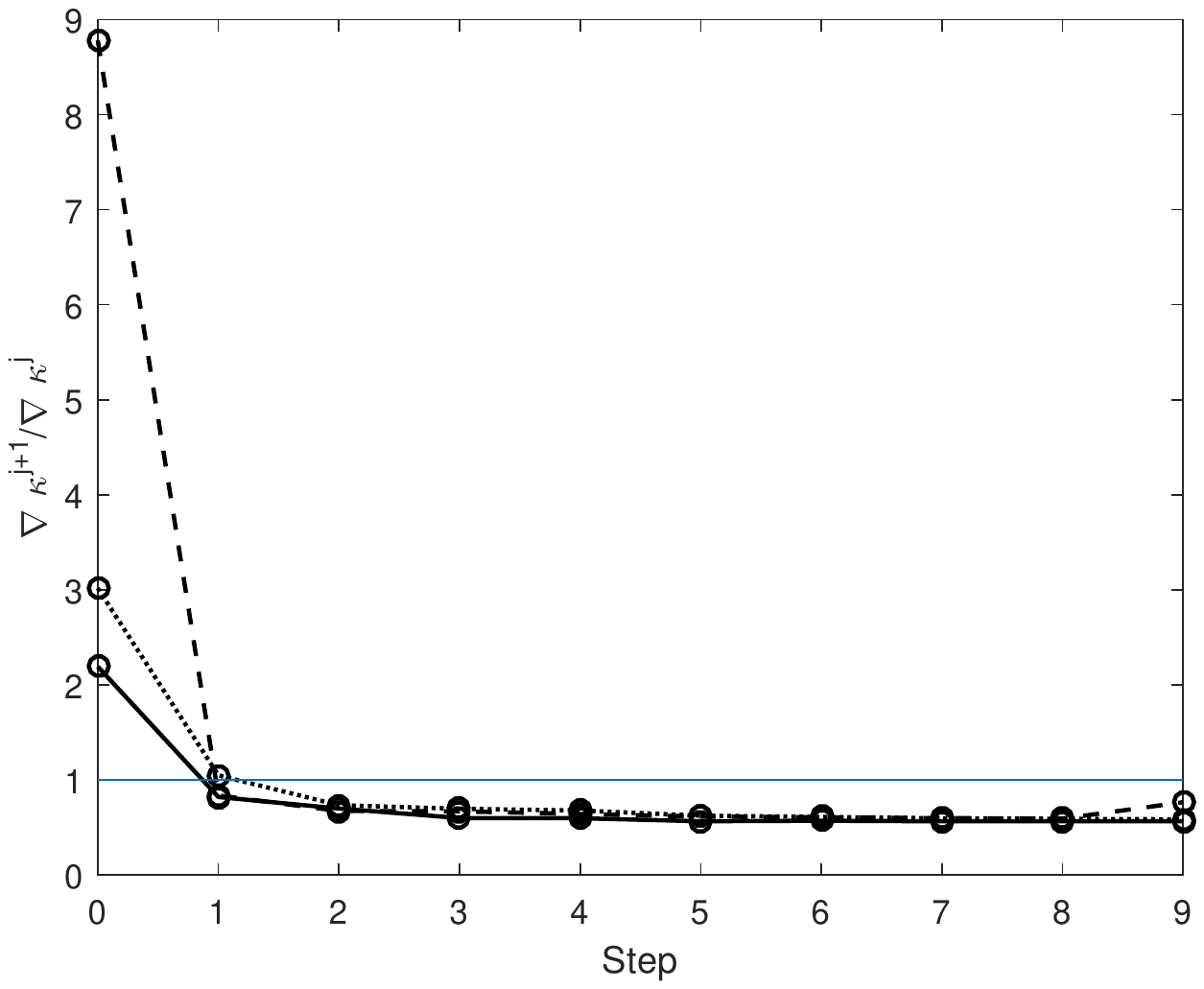}
		\vspace*{-4cm}
		\caption{}
		\label{fig:RatioMaxDiff}
	\end{subfigure}
\vspace*{4cm}
		\caption{(a): Ratio of decay $\delta^{j+1}/\delta^j$ of the sequence $\{\delta^j\}_j$, (b): Maximum of curvature differences $\nabla \kappa^j$, (c): Ratio of decay $\nabla \kappa^{j+1}/\nabla \kappa^j$ of examples Fig. \ref{fig:incenter}: M-like (solid line), T-like (dashed line) and S-like (dotted line).}
		\label{}
	\end{figure}

	\section{Conclusion and future work}
	We have seen that the introduced discrete geodesic curvature is more practical to give examples of spherical  $G^2$-continuous curves.. On the other hand, we gave a spherical generalization of the planar angle-based 4-point subdivision scheme interpolating unit vectors. A scheme generating $G^2$-continuous curves is also proposed. In the future work, we will investigate the notion of  discrete geodesic curvature to study the shape warping and morphing of curves on surfaces.

	\appendix
	\section{}
	Let $P^j:=\{p_i^j \, \in \mathbb{R}^2 \}$ and
	$$\alpha_{i}^{j}:=\sphericalangle(\overrightarrow{p_{i-1}^{j}p_{i+1}^{j}},\overrightarrow{p_{i-1}^{j}p_{i}^{j}}), \qquad \beta_{i}^{j}:=\sphericalangle(\overrightarrow{p_{i}^{j}p_{i+1}^{j}},\overrightarrow{p_{i-1}^{j}p_{i+1}^{j}}), \qquad \delta_{i}^{j}:=\sphericalangle (\overrightarrow{p_{i}^{j}p_{i+1}^{j}},\overrightarrow{p_{i-1}^{j}p_{i}^{j}}).$$
	The planar angle-based 4-point scheme \cite{Dy} is  defined by:
	\begin{equation}
		\left\{
		\begin{aligned}
			&p_{2i}^{j+1}=p_{i}^{j},  \\
			&p_{2i+1}^{j+1}=\displaystyle p_i^j+\frac{1}{2cos(\alpha_{2i+1}^{j+1})}\, R(\alpha_{2i+1}^{j+1})\, (p_{i+1}^j-p_i^j),\\
		\end{aligned}
		\right.
		\label{40}
	\end{equation}
	where $$\alpha_{2i+1}^{j+1}=\beta_{2i+1}^{j+1}=\displaystyle \frac{\delta_{i}^{j}+\delta_{i+1}^{j}}{8},$$
	and $R(\alpha_{2i+1}^{j+1})$ is the rotation matrix by angle $\alpha_{2i+1}^{j+1}$. We have:
	\begin{prop}\label{t1}
		Let $P^0=\{p_{-2},p_{-1},p,p_1,p_2\}$ be  a planar polygon with equal edge lengths. If $\delta_{-1}^0=\delta_1^0$ and $\delta_{0}^0 \neq \delta_{1}^0$ (see Fig. \ref{fig:p4}), then  $\displaystyle \lim_{j \to \infty}\, \frac{\delta_0^{j}}{e_0^{j}}=\pm \infty$.
	\end{prop}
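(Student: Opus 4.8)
The plan is to exploit the reflection symmetry about the central vertex $p=p_0$ to collapse the dynamics of the two relevant angular defects into a single $2\times 2$ linear recursion, and then to read off the asymptotics of $\delta_0^j/e_0^j$ from the (defective) spectrum of that recursion. First I would record the planar specialization of the defect rules \eqref{36}: since the plane is flat every signed area $A_{2i+1}^{j+1}$ vanishes, so
\begin{equation*}
\delta_{2i+1}^{j+1}=\tfrac14\big(\delta_i^j+\delta_{i+1}^j\big),\qquad
\delta_{2i}^{j+1}=\tfrac18\big(-\delta_{i-1}^j+6\delta_i^j-\delta_{i+1}^j\big).
\end{equation*}
Then I would prove by induction on $j$ the symmetry $\delta_{-k}^j=\delta_k^j$ and $e_{-1}^j=e_0^j$: the hypotheses $\delta_{-1}^0=\delta_1^0$ and equal initial edge lengths make $P^0$ symmetric about $p$, and both rules above, together with the edge rule below, are equivariant under the reflection fixing $p_0^j$ and swapping $p_k^j\leftrightarrow p_{-k}^j$.

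Using $\delta_{-1}^j=\delta_1^j$, the persistent centre $p_0^{j+1}=p_0^j$ and its freshly inserted neighbour $p_1^{j+1}$ obey the \emph{closed} system
\begin{equation*}
\begin{pmatrix}\delta_0^{j+1}\\ \delta_1^{j+1}\end{pmatrix}
= M\begin{pmatrix}\delta_0^{j}\\ \delta_1^{j}\end{pmatrix},\qquad
M=\begin{pmatrix}3/4 & -1/4\\ 1/4 & 1/4\end{pmatrix}.
\end{equation*}
The decisive algebraic fact is that $M$ has the single eigenvalue $1/2$ with one–dimensional eigenspace $\operatorname{span}(1,1)$, i.e. $M$ is \emph{defective}. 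I would extract the asymptotics by decoupling the difference: $\delta_0^{j+1}-\delta_1^{j+1}=\tfrac12(\delta_0^j-\delta_1^j)$, hence $\delta_0^j-\delta_1^j=2^{-j}(\delta_0^0-\delta_1^0)$; feeding this back into $\delta_0^{j+1}=\tfrac12\delta_0^j+\tfrac14(\delta_0^j-\delta_1^j)$ integrates to the linear-times-geometric form
\begin{equation*}
\delta_0^j=2^{-j}\Big(\delta_0^0+\tfrac12(\delta_0^0-\delta_1^0)\,j\Big).
\end{equation*}

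For the denominator I would use that the new-point formula \eqref{40} is a rotation followed by a scaling, so $\|p_{2i+1}^{j+1}-p_i^j\|=e_i^j/\big(2\cos\alpha_{2i+1}^{j+1}\big)$; taking $i=0$ gives $e_0^{j+1}=e_0^j/\big(2\cos\alpha_1^{j+1}\big)$ with $\alpha_1^{j+1}=(\delta_0^j+\delta_1^j)/8$. By the closed form above $\alpha_1^{k}=O(k\,2^{-k})$, so the angles are square-summable and the telescoping product $\prod_k(\cos\alpha_1^k)^{-1}$ converges to a positive constant; consequently $e_0^j=\Theta(2^{-j})$ with \emph{no} polynomial factor. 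Dividing, the common factor $2^{-j}$ cancels and $\delta_0^j/e_0^j$ grows linearly in $j$ with sign equal to that of $\delta_0^0-\delta_1^0$, which yields $\lim_{j\to\infty}\delta_0^j/e_0^j=\pm\infty$ exactly as stated.

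The main obstacle, and indeed the whole content of the statement, is the defectiveness of $M$: it is precisely the \emph{repeated} eigenvalue $1/2$, coinciding with the per-step contraction rate of the edges, that produces the extra factor $j$ in the numerator which the denominator cannot absorb. Had $M$ been diagonalizable (or had its eigenvalues split in modulus), the ratio $\delta_0^j/e_0^j$ would converge to a finite value and the scheme would be $G^2$. The only genuinely delicate point is establishing $e_0^j=\Theta(2^{-j})$ \emph{sharply}, i.e. controlling $\prod_k(\cos\alpha_1^k)^{-1}$; this rests on the geometric decay of the defects already supplied by the closed form (equivalently, by the planar analogue of Theorem \ref{a5}).
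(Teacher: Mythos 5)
Your proof is correct and reaches the same closed form $\delta_0^j=2^{-j-1}\bigl(2\delta_0^0+j(\delta_0^0-\delta_1^0)\bigr)$ and the same treatment of the denominator (convergence of $\prod_k(\cos\alpha_1^k)^{-1}$ via square-summability of the $\alpha_1^k$) as the paper, so the overall strategy coincides. The one genuine difference is in how the linear algebra is organized: the paper keeps the full triple $(\delta_{-1}^j,\delta_0^j,\delta_1^j)$, observes that it is closed under the refinement (indices $-1,0,1$ map to $-1,0,1$), and computes the powers of the resulting $3\times3$ matrix explicitly before imposing $\delta_{-1}^0=\delta_1^0$; you instead impose the reflection symmetry $\delta_{-k}^j=\delta_k^j$ first (which, as you note, must be propagated by induction using the equivariance of the rules — the paper only asserts $e_{-1}^j=e_0^j$ without comment) and collapse to the $2\times2$ matrix $M=\bigl(\begin{smallmatrix}3/4&-1/4\\ 1/4&1/4\end{smallmatrix}\bigr)$ with the defective eigenvalue $1/2$. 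Your route buys a shorter computation, avoids the typos that crept into the paper's expression for $A^j$ in \eqref{20}, and makes the mechanism transparent: the Jordan block at $1/2$ resonates with the edge-contraction rate $1/2$, producing the uncancellable factor $j$. The paper's route buys slightly more generality (formulas for all three defects before the symmetry assumption is used). Both arguments are complete modulo the same small points: justifying the symmetry propagation and checking $\cos\alpha_1^k\neq 0$, which holds for large $k$ since $\alpha_1^k\to 0$.
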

	\begin{proof}
		\begin{figure}
			\centering
			\includegraphics[width=0.7\linewidth]{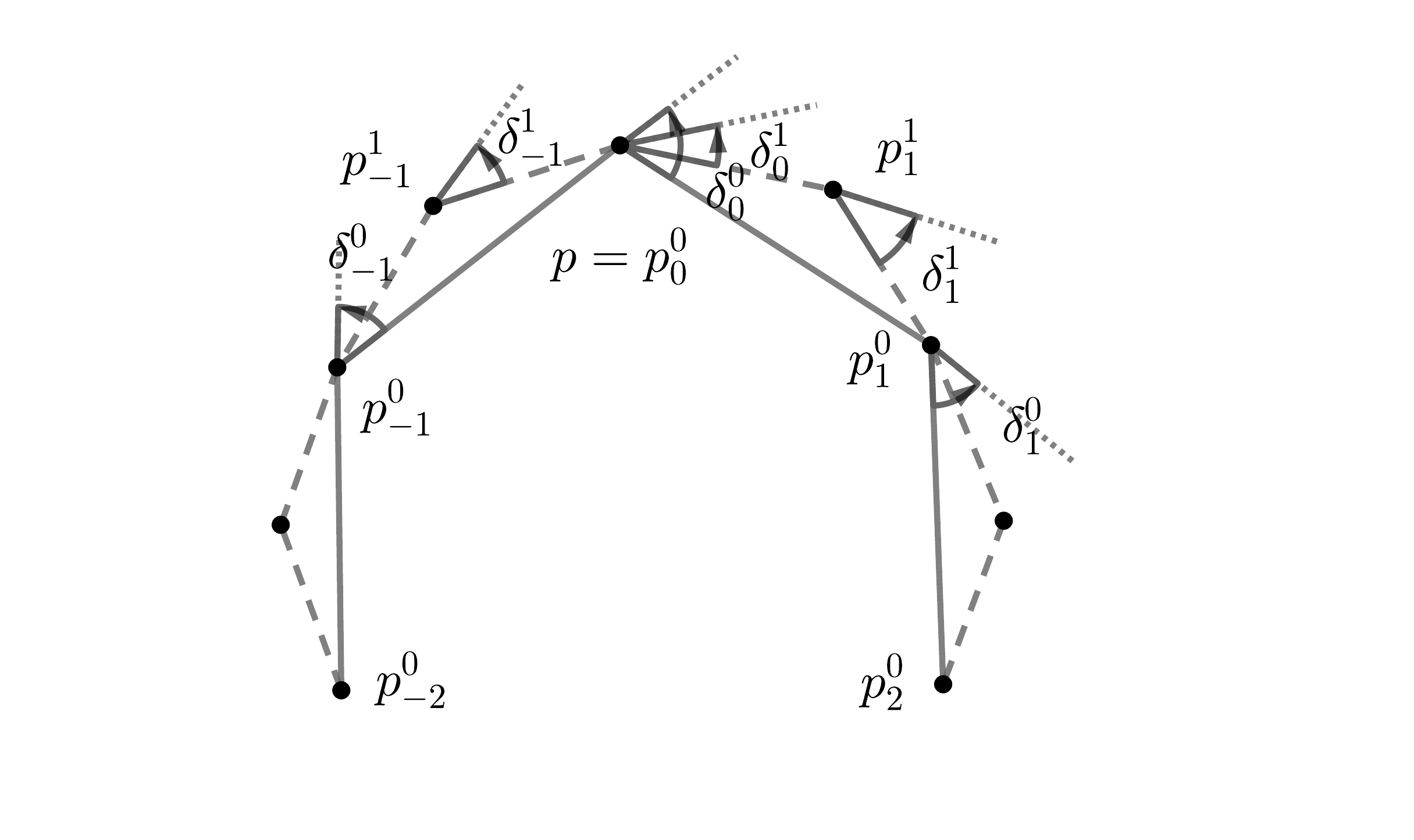}
			\caption{A first iteration of the polygon $P^0$.}
			\label{fig:p4}
		\end{figure}
		In the planar case, Eqs. \eqref{38} become
		\begin{equation}
			\left\{
			\begin{aligned}
				&\delta_{2i-1}^{j+1}=\displaystyle \frac{\delta_{i-1}^j+\delta_{i}^j}{4},\\
				&\delta_{2i}^{j+1}=\frac{-\delta_{i-1}^j+6\delta_i^j-\delta_{i+1}^j}{8},\\
				&\delta_{2i+1}^{j+1}=\displaystyle \frac{\delta_i^j+\delta_{i+1}^j}{4}.\\
			\end{aligned}
			\right.
		\end{equation}
		If  we set  $A=\begin{pmatrix}
			2&2&0\\
			-1&6&-1\\
			0&2&2\\
		\end{pmatrix}$, then the above system becomes
		$$\displaystyle
		\begin{pmatrix}
			\delta_{2i-1}^{j+1}\\
			\delta_{2i}^{j+1}\\
			\delta_{2i+1}^{j+1}\\
		\end{pmatrix}=\frac{1}{8} A
		\begin{pmatrix}
			\delta_{i-1}^{j}\\
			\delta_{i}^{j}\\
			\delta_{i+1}^j\\
		\end{pmatrix}.$$
		
		Without losing generality, we can suppose that $i=0$. We get
		
		$$\displaystyle
		\begin{pmatrix}
			\delta_{-1}^{j+1}\\
			\delta_{0}^{j+1}\\
			\delta_{1}^{j+1}\\
		\end{pmatrix}=\frac{1}{8} A
		\begin{pmatrix}
			\delta_{-1}^{j}\\
			\delta_{0}^{j}\\
			\delta_{1}^j\\
		\end{pmatrix}.$$
		Hence
		$$\displaystyle
		\begin{pmatrix}
			\delta_{-1}^{j}\\
			\delta_{0}^{j}\\
			\delta_{1}^{j}\\
		\end{pmatrix}=\frac{1}{2^{3j}} A^j
		\begin{pmatrix}
			\delta_{-1}^{0}\\
			\delta_{0}^{0}\\
			\delta_{1}^0\\
		\end{pmatrix}.$$
		Finally we obtain
		\begin{equation}
			\left\{
			\begin{aligned}
				\displaystyle
				&\delta_{-1}^j=\Big( \frac{-j}{2^{j+2}}+\frac{1}{2^{j+1}}+\frac{1}{2^{2j+1}}\Big) \delta_{-1}^0+\frac{j}{2^{j+1}}\delta_{0}^0+ \Big( \frac{-j}{2^{j+2}}+\frac{1}{2^{j+1}}- \frac{1}{2^{2j+1}} \Big)\delta_{1}^1,\\
				&\delta_0^j=\frac{-j}{2^{j+2}}\delta_{-1}^0+\frac{6}{2^{j+2}}\delta_{0}^0-\frac{-j}{2^{j+2}}\delta_{1}^0       ,\\
				&\delta_1^j=\Big( \frac{-j}{2^{j+2}}+\frac{1}{2^{j+1}}-\frac{1}{2^{2j+1}}\Big) \delta_{-1}^0+\frac{j}{2^{j+1}}\delta_{0}^0+ \Big( \frac{-j}{2^{j+2}}+\frac{1}{2^{j+1}}+ \frac{1}{2^{2j+1}} \Big)\delta_{1}^1 .\\
			\end{aligned}
			\right.
			\label{20}
		\end{equation}
		Since $\delta_{-1}^0=\delta_1^0$, Eqs. \eqref{20} become
		\begin{equation*}
			\left\{
			\begin{aligned}
				&\delta_0^j=\displaystyle \frac{2+j}{2^{j+1}} \delta_0^0-\frac{j}{2^{j+1}}\delta_1^0,\\
				&\delta_1^j= \frac{j}{2^{j+1}} \delta_0^0+\frac{2-j}{2^{j+1}}\delta_1^0.\\
			\end{aligned}
			\right.
		\end{equation*}
		On the other hand, we have $\alpha_1^j=\alpha_{-1}^j=\displaystyle \frac{\delta_0^{j-1}+\delta_1^{j-1}}{8}$, then
		\begin{equation}
			\alpha_1^j=\displaystyle  \frac{j(\delta_0^0-\delta_1^0)}{2^{j+2}}+\frac{2}{2^{j+2}}\delta_1^0.
			\label{19}
		\end{equation}
		Let $e_0^j:=\|p_1^j-p\|=\|p_{-1}^j-p\|$. Since $e_0^{j+1}=\displaystyle \frac{e_0^j}{2\, cos(\alpha_1^{j+1})}$, this yields
		$$e_0^j=\displaystyle \frac{e_0^0}{2^j} \prod_{k=1}^{j} cos(\alpha_1^k)^{-1}.$$
		Finally
		\begin{align}
			\displaystyle \frac{ \delta_0^j}{ e_0^j}&=\frac{\displaystyle  \frac{2+j}{2^{j+1}} \delta_0^0-\frac{j}{2^{j+1}}\delta_1^0    }{\displaystyle \frac{e_0^0}{2^j} \prod_{k=1}^{j} cos(\alpha_1^k)^{-1}}\\
			&=\displaystyle \frac{ j(\delta_0^0-\delta_1^0) +2\delta_0^0 }{2e_0^0\, \displaystyle \prod_{k=1}^{j} cos(\alpha_1^k)^{-1}}.
			\label{35}
		\end{align}
		By \eqref{19}, the series $\displaystyle \sum_{j}\, |\alpha_1^j|$ is convergent, then $\displaystyle \sum_{j}\, (\alpha_1^j)^2$ is also convergent. By equivalence, the product\\ $\displaystyle \prod_{k=1}^{j} \Bigg( 1-\frac{(\alpha_1^k)^2}{8}\Bigg)^{-1}$ is also convergent and so is $\displaystyle \prod_{k=1}^{j} cos(\alpha_1^k)^{-1}$. Finally, $\displaystyle \lim_{j \to \infty}\, \frac{\delta_0^{j}}{e_0^{j}}=\pm \infty$  once $\delta_{0}^0 \neq \delta_{1}^0$  (depends on the sign of $(\delta_0^0-\delta_1^0)$).
	\end{proof}
	
	%% main text
	
	%% The Appendices part is started with the command \appendix;
	%% appendix sections are then done as normal sections
	%% \appendix
	
	%% \section{}
	%% \label{}
	
	%% If you have bibdatabase file and want bibtex to generate the
	%% bibitems, please use
	%%
	%  \bibliographystyle{elsarticle-harv}
	%%  \bibliography{<your bibdatabase>}
	
	%% else use the following coding to input the bibitems directly in the
	%% TeX file.

\end{document}